\newtheorem{theorem}{Theorem}[section]
\newtheorem{lemma}[theorem]{Lemma}
\newtheorem{cor}[theorem]{Corollary}
\theoremstyle{definition}
\theoremstyle{remark}
\newtheorem{remark}[theorem]{Remark}
\numberwithin{equation}{section}
\newcommand{\rr}{{\mathbb R}}
\newcommand{\rd}{{\mathbb R^d}}
\newcommand{\nat}{{\mathbb N}}
\newcommand{\Exp}{{\mathbb E}}
\newcommand{\Ker}{\operatorname{Ker}}
\newcommand{\GL}{\operatorname{GL}}
\begin{document}
\sloppy
\title[Hausdorff dimension of operator semistable L\'evy processes]{The Hausdorff dimension of\\ operator semistable L\'evy processes} 
\author{Peter Kern}
\address{Peter Kern, Mathematisches Institut, Heinrich-Heine-Universit\"at D\"usseldorf, Universit\"atsstr. 1, D-40225 D\"usseldorf, Germany}
\email{kern\@@{}math.uni-duesseldorf.de}

\author{Lina Wedrich}
\address{Lina Wedrich, Fakult\"at f\"ur Wirtschaftswissenschaften, Universit\"at Duisburg-Essen, Campus Essen, Universit\"atsstr. 12, D-45141 Essen, Germany}
\email{lina.wedrich\@@{}uni-due.de} 

\date{\today}

\begin{abstract}
Let $X=\{X(t)\}_{t\geq0}$ be an operator semistable L\'evy process in $\rd$ with exponent $E$, where $E$ is an invertible linear operator on $\rd$ and $X$ is semi-selfsimilar with respect to $E$. By refining arguments given in Meerschaert and Xiao \cite{MX} for the special case of an operator stable (selfsimilar) L\'evy process, for an arbitrary Borel set $B\subseteq\rr_+$ we determine the Hausdorff dimension of the partial range $X(B)$ in terms of the real parts of the eigenvalues of $E$ and the Hausdorff dimension of $B$.
\end{abstract}

\keywords{L\'evy process, operator semistable process, semi-selfsimilarity, sojourn time, range, Hausdorff dimension, positivity of density}
\subjclass[2010]{Primary 60G51; Secondary 28A78, 28A80, 60G17, 60G52.}

\maketitle

\baselineskip=18pt

\section{Introduction}

Let $X=\{X(t)\}_{t\geq0}$ be a L\'evy process in $\rd$, i.e. a stochastically continuous process with stationary and independent increments, starting in the origin $X(0)=0$ almost surely. Without loss of generality, we will assume that the process has c\`adl\`ag paths (right continuous with left limits). The distribution of the process on the space of  c\`adl\`ag functions is uniquely determined by the distribution of $X(1)$ which can be an arbitrary infinitely divisible distribution. We will always assume that the distribution of $X(1)$ is full, i.e. not supported on any lower dimensional hyperplane. The L\'evy process $X$ is called operator semistable if the distribution $\mu_1=P_{X(1)}$ is strictly operator semistable, i.e. $\mu_1$ is an infinitely divisible probability measure fulfilling
\begin{equation}\label{sos}
\mu_1^{\ast c}=c^E\mu_1\quad\text{ for some }c>1
\end{equation}
and some linear operator $E$ on $\rd$ called the exponent, where $c^E\mu_1(dx)=\mu_1(c^{-E}dx)$ denotes the image measure under the invertible linear operator $c^E=\sum_{n=0}^\infty\frac{(\log c)^n}{n!}\,E^n$. For details on operator semistable distributions we refer to \cite{Luc,Cho} and the monograph \cite{MS}. To be more precise, we call the L\'evy process $(c^E,c)$-operator semistable due to the space-time scaling
\begin{equation}\label{soss}
\{c^EX(t)\}_{t\geq0}\stackrel{\rm fd}{=}\{X(ct)\}_{t\geq0}\quad\text{ for some }c>1
\end{equation}
which easily follows from \eqref{sos}, where $\stackrel{\rm fd}{=}$ denotes equality of all finite dimensional marginal distributions. The property \eqref{soss} is called strict operator semi-selfsimilarity and one can equivalently introduce an operator semistable L\'evy process as a strictly operator semi-selfsimilar L\'evy process. It is well known that for a given operator semistable L\'evy process $X$ the exponent $E$ is not unique, but the real parts of the eigenvalues of every possible exponent are the same, including their multiplicity; see \cite{MS}.

In case \eqref{sos} or, equivalently, \eqref{soss} is fulfilled for every $c>0$ the L\'evy process is called operator stable, respectively strict operator selfsimilar, with exponent $E$. In the last decades efforts have been made to calculate the Hausdorff dimension of the range $X([0,1])$ for an operator stable L\'evy process $X$. For a survey on general dimension results for L\'evy processes we refer to \cite{Xiao, KX}. If $X$ is an $\alpha$-stable L\'evy process in $\rd$ for some $\alpha\in(0,2]$, i.e. the exponent is a multiple of the identity $E=\alpha\cdot I$, Blumenthal and Getoor \cite{BG} show that the Hausdorff dimension of the range is $\dim_{\rm H}X([0,1])=\min(\alpha,d)$ almost surely. Pruitt and Taylor \cite{PT} calculate $\dim_{\rm H}X([0,1])$ for a L\'evy process in $\rd$ with independent stable marginals of index $\alpha_1\geq\cdots\geq\alpha_d$. Here, $\dim_{\rm H}X([0,1])=\alpha_1$ almost surely if $\alpha_1\leq1$ or $\alpha_1=\alpha_2$ and in all other cases $\dim_{\rm H}X([0,1])=1+\alpha_2(1-\alpha_1^{-1})\in(\alpha_2,\alpha_1)$ almost surely. In this case $E$ is a diagonal operator with $\alpha_1,\ldots,\alpha_d$ on the diagonal in a certain order. Later, based on results of Pruitt \cite{Pru}, Becker-Kern, Meerschaert and Scheffler \cite{BMS} obtained that for more general operator stable L\'evy processes the formulas of Pruitt and Taylor are still valid without the assumption of independent stable marginals, where $\alpha_1,\ldots,\alpha_d$ have to be interpreted as the reciprocals of the real parts of the eigenvalues of the exponent $E$. Their result does not cover the full class of operator stable L\'evy processes, since in case $\alpha_1>\min(1,\alpha_2)$ it is required that the density of $X(1)$ is positive at the origin. Finally, Meerschaert and Xiao \cite{MX} show that the restriction on the density is superflous. In addition they calculate the Hausdorff dimension of the partial range $\dim_{\rm H}X(B)$ for an arbitrary operator stable L\'evy process $X$ and an arbitrary Borel set $B\subseteq\rr_+$ in terms of the real parts of the eigenvalues of the exponent $E$ and the Hausdorff dimension of $B$, namely
\begin{equation}\label{MXmain}
\dim_{\rm H}X(B)=\begin{cases}\alpha_1\dim_{\rm H}B & \text{ if } \alpha_1\dim_{\rm H}B\leq1\text{ or }\alpha_1=\alpha_2,\\ 1+\alpha_2\big(\dim_{\rm H}B -\frac1{\alpha_1}\big) & \text{ otherwise.}
\end{cases}
\end{equation}

Since operator semistable L\'evy processes require the space-time scaling property to be only fulfilled on a discrete scale, they allow more flexibility in modeling. The most prominent example of a semistable, non-stable distribution is perhaps the limit distribution of cumulative gains in a series of St.~Petersburg games. Our aim is to generalize the above dimension results for the larger class of operator semistable L\'evy processes, following the outline given by \cite{MX}. We will prove that \eqref{MXmain} remains valid for operator semistable L\'evy processes, but our methods go beyond simple adjustments of the arguments given in \cite{MX}. To the best of our knowledge, our result is the first dimension result for L\'evy processes with a scaling or selfsimilarity property on a discrete scale. Whereas, for deterministic selfsimilar sets (on a discrete scale), numerous examples for a determination of the Hausdorff dimension and other fractal dimensions exist in the literature, e.g. for Cantor sets or Sierpinski gaskets.

The paper is organized as follows. In section 2.1 we recall the definitions of Hausdorff and capacitary dimension and their relationship. We further recall a spectral decomposition result from \cite{MS} in section 2.2, which enables us to decompose the operator semistable L\'evy process according to the distinct real parts of the eigenvalues of the exponent $E$. Preparatory for the proof of our main results, in section 2.3 certain uniform density bounds for $\{X(t)\}_{t\in[1,c)}$ are given and a certain positivity set for the densities is constructed. These will be needed to obtain sharp lower bounds for the expected sojourn times of operator semistable L\'evy processes in a closed ball in section 2.4. Note that the characterization of the positivity set of densities is still an open problem even for operator stable densities. In the special case of an $\alpha$-stable L\'evy process with exponent $E=\alpha\cdot I$ the problem is completely solved in a series of papers \cite{Tay,Port, PV, ARRS} . A certain extension for $\alpha$-semistable L\'evy processes can be found in section 3 of \cite{SW}. Finally, in section 3 we state our main results on the Hausdorff dimension of operator semistable sample paths, including the proofs.

Throughout this paper $K$ denotes an unspecified positive and finite constant which may vary in each occurrence. Specified constants will be denoted by $K_1,K_2$, etc.

\section{Preliminaries}

\subsection{Hausdorff and capacitary dimension} 

For an arbitrary subset $A\subseteq\rd$ and $s\geq0$  the $s$-dimensional Hausdorff measure is defined by
\begin{equation}\label{sdimH}
	\mathcal{H}^{s}(A)=\lim_{\varepsilon\downarrow0}\inf\left\{\sum_{i=1}^{\infty}|A_{i}|^{s}:\,A\subseteq\bigcup_{i=1}^{\infty}A_{i},\; 0<|A_{i}|\leq \varepsilon\right\},
\end{equation}
where $|A|=\sup\{\|x-y\|:\,x,y\in A\}$ denotes the diameter of $A\subseteq\rd$. The sequence of sets $\{A_{i}\}_{i\geq1}$ fulfilling the conditions on the right-hand side of \eqref{sdimH} is called an $\varepsilon$-covering of $A$. It can be shown that $\mathcal{H}^{s}$ is a metric outer measure on $\rd$ and there exists a unique value $\dim_{\rm H}A\geq0$ such that $\mathcal{H}^{s}(A)=\infty$ if  $0\leq s<\dim_{\rm H}A$ and $\mathcal{H}^{s}(A)=0$ if $\dim_{\rm H}A<s<\infty$; e.g., see \cite{Fal1,Fal2}. The critical value
\begin{equation}\label{Hdim}
\dim_{\rm H}A=\inf\{s>0:\,\mathcal{H}^{s}(A)=0\}=\sup\{s>0:\,\mathcal{H}^{s}(A)=\infty\}
\end{equation}
is called the Hausdorff dimension of $A$.

Now let $A\subseteq\rd$ be a Borel set and denote by $\mathcal M^1(A)$ the set of probability measures on $A$. For $s>0$ the $s$-energy of $\mu\in\mathcal M^1(A)$ is defined by
$$I_s(\mu)=\int_A\int_A\frac{\mu(dx)\mu(dy)}{\|x-y\|^s}.$$
By Frostman's lemma, e.g., see \cite{Kah,Mat}, there exists a probability measure $\mu\in\mathcal M^1(A)$ with $I_s(\mu)<\infty$ if $\dim_{\rm H}A>s$. In this case $A$ is said to have positive $s$-capacity $C_s(A)$ given by
$$C_s(A)=\sup\{I_s(\mu)^{-1}:\,\mu\in\mathcal M^1(A)\}$$
and the capacitary dimension of $A$ is defined by
$$\dim_{\rm C}A=\sup\{s>0:\,C_s(A)>0\}=\inf\{s>0:\,C_s(A)=0\}.$$
A consequence of Frostman's theorem, e.g., see \cite{Kah,Mat}, is that for Borel sets $A\subseteq\rd$ the Hausdorff and capacitary dimension coincide. Therefore, one can prove lower bounds for the Hausdorff dimension with a simple capacity argument: if $I_s(\mu)<\infty$ for some $\mu\in\mathcal M^1(A)$ then $\dim_{\rm H}A=\dim_{\rm C}A\geq s$.

\subsection{Spectral decomposition}

Let $\{X(t)\}_{t\geq0}$ be a $(c^{E},c)$-operator semistable L\'evy process in $\rd$. Factor the minimal polynomial of $E$ into $f_{1}(x)\cdot\ldots\cdot f_{p}(x)$ such that every root of $f_{j}$ has real part $a_{j}$, where $a_1<\cdots<a_p$ are the distinct real parts of the eigenvalues of $E$ and $a_1\geq\frac12$ by Theorem 7.1.10 in  \cite{MS}. According to  Theorem 2.1.14 in \cite{MS} we can decompose $\rd$ into a direct sum $\rd=V_{1}\oplus\ldots\oplus V_{p}$, where $V_{j}=\Ker(f_{j}(E))$ are $E$-invariant subspaces. Now, in an appropriate basis, $E$ can be represented as a block-diagonal matrix $E=E_{1}\oplus\ldots\oplus E_{p}$, where $E_{j}:V_{j}\rightarrow V_{j}$ and every eigenvalue of $E_{j}$ has real part $a_{j}$. Especially, every $V_j$ is an $E_j$-invariant subspace of dimension $d_j=\dim V_j$. Now we can write $x=x_1+\cdots+x_p\in\rd$ and $t^Ex=t^{E_1}x_1+\cdots+t^{E_p}x_p$ with respect to this direct sum decomposition, where $x_j\in V_j$ and $t>0$. Moreover, for the operator semistable L\'evy process we have $X(t)=X^{(1)}(t)+\ldots+X^{(p)}(t)$ with respect to this direct sum decomposition, where $\{X^{(j)}(t)\}_{t\geq0}$ is a  $(c^{E_{j}},c)$-operator semistable L\'evy process on $V_j\cong \rr^{d_j}$ by  Lemma 7.1.17 in \cite{MS}. We can further choose an inner product on $\rd$ such that the subspaces $V_j$, $1\leq j\leq p$, are mutually orthogonal and throughout this paper for $x\in\rd$ we may choose $\|x\|=\langle x,x\rangle^{1/2}$ as the associated Euclidean norm on $\rd$. With this choice, in particular we have for $t=c^rm>0$
\begin{equation}
	\|X(t)\|^2\stackrel{\rm d}{=}\|c^{rE}X(m)\|^{2}=\|c^{rE_{1}}X^{(1)}(m)\|^{2}+\ldots+\|c^{rE_{p}}X^{(p)}(m)\|^{2},
\end{equation}
with $r\in\mathbb{Z}$ and $m\in[1,c)$.
The following result on the growth behavior of the exponential operators $t^{E_j}$ near the origin $t=0$ is a reformulation of Lemma 2.1 in \cite{MX} and a direct consequence of Corollary 2.2.5 in \cite{MS}.

\begin{lemma}\label{specbound}
For every $j=1,\ldots,p$ und every $\varepsilon>0$ there exists a finite constant $K\geq 1$ such that for all $0<t\leq1$ we have
\begin{equation}
	K^{-1}t^{a_{j}+\varepsilon}\leq \|t^{E_{j}}\|\leq K\,t^{a_{j}-\varepsilon}
\end{equation}
and
\begin{equation}
	K^{-1}t^{-(a_{j}-\varepsilon)}\leq \|t^{-E_{j}}\|\leq K\,t^{-(a_{j}+\varepsilon)}.
\end{equation}
\end{lemma}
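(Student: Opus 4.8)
The plan is to reduce the statement to the elementary fact that the exponential of an operator with purely imaginary spectrum grows only polynomially in $\log(1/t)$; this is exactly the content of Corollary 2.2.5 in \cite{MS}, so one option is to cite it directly, but I would reconstruct it as follows. Since every eigenvalue of $E_j$ has real part $a_j$, I would write $E_j = a_j I_j + N_j$ on $V_j$, where $I_j$ is the identity on $V_j$ and $N_j := E_j - a_j I_j$ has only purely imaginary eigenvalues. As $a_j I_j$ commutes with $N_j$, the exponential factors as $t^{E_j} = t^{a_j}\,t^{N_j}$ for $t>0$, whence $\|t^{E_j}\| = t^{a_j}\|t^{N_j}\|$, and it suffices to sandwich $\|t^{N_j}\|$ between constant multiples of $t^{\mp\varepsilon}$.

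To estimate $\|t^{N_j}\|$ for $0<t\le1$, I would set $s := -\log t \ge 0$, so that $t^{N_j} = \exp(-sN_j)$, and use the additive Jordan decomposition $N_j = S_j + Q_j$ into a commuting semisimple part $S_j$ and nilpotent part $Q_j$. Over a suitable real basis $S_j$ is block-diagonal with $2\times2$ rotation-generator blocks, so $\exp(-sS_j)$ is conjugate to a block orthogonal matrix and hence $\|\exp(-sS_j)\|\le K_1$ uniformly in $s$; meanwhile $\exp(-sQ_j)=\sum_{k=0}^{d_j-1}\frac{(-s)^k}{k!}Q_j^k$ is a matrix polynomial in $s$ of degree at most $d_j-1$, so $\|\exp(-sQ_j)\|\le K_2(1+s^{d_j-1})$. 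Because the two factors commute, multiplying gives the polynomial bound $\|t^{N_j}\| \le K(1+s^{d_j-1})$, and since any polynomial in $s=\log(1/t)$ is dominated by $K_\varepsilon\, e^{\varepsilon s}=K_\varepsilon\,t^{-\varepsilon}$, this yields $\|t^{N_j}\|\le K\,t^{-\varepsilon}$. For the lower bound I would observe that the eigenvalues of $\exp(-sN_j)$ all lie on the unit circle, so its spectral radius equals $1$ and therefore $\|t^{N_j}\|\ge1\ge t^{\varepsilon}$ for $0<t\le1$. Reinserting the factor $t^{a_j}$ then gives $K^{-1} t^{a_j+\varepsilon}\le\|t^{E_j}\|\le K\,t^{a_j-\varepsilon}$.

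For the second pair of inequalities I would apply the identical argument to $-E_j$, whose eigenvalues all have real part $-a_j$: writing $-E_j = -a_j I_j - N_j$ with $-N_j$ again purely imaginary, the same reasoning gives $K^{-1}t^{-a_j+\varepsilon}\le\|t^{-E_j}\|\le K\,t^{-a_j-\varepsilon}$, which is the asserted estimate since $-(a_j-\varepsilon)=-a_j+\varepsilon$ and $-(a_j+\varepsilon)=-a_j-\varepsilon$. The result is genuinely elementary linear algebra, so there is no serious obstacle; the only point requiring care is that the whole estimate hinges on deviations from the leading rate $t^{a_j}$ being merely \emph{polynomial} in $\log(1/t)$ rather than exponential, which is precisely where the hypothesis that all eigenvalues of $E_j$ share the single real part $a_j$ is used. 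The change of basis diagonalizing $S_j$ contributes a constant depending on $E_j$ and $\varepsilon$, but this is harmless since $K$ is permitted to vary; in a write-up the cleanest route is simply to cite Corollary 2.2.5 in \cite{MS} and note that the factorization $t^{E_j}=t^{a_j}t^{N_j}$ converts its statement into the present $\varepsilon$-form.
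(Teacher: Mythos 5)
Your proposal is correct and matches the paper's approach: the paper offers no proof of Lemma \ref{specbound} at all, stating only that it is a reformulation of Lemma 2.1 in \cite{MX} and a direct consequence of Corollary 2.2.5 in \cite{MS}, which is precisely the citation you identify as the cleanest route. Your self-contained reconstruction --- factoring $t^{E_j}=t^{a_j}t^{N_j}$ with $N_j=E_j-a_jI_j$ purely imaginary, bounding $\|t^{N_j}\|$ polynomially in $\log(1/t)$ via the Jordan--Chevalley decomposition, and getting the lower bound from the spectral radius --- is the standard argument behind that corollary, and every step checks out, including the passage to $-E_j$ for the second pair of inequalities.
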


Throughout this paper let $\alpha_j=1/a_j$ denote the reciprocals of the distinct real parts of the eigenvalues of $E$ with $0<\alpha_p<\cdots<\alpha_1\leq2$.

\subsection{Density bounds}

Let $X=\{X(t)\}_{t\geq0}$ be an operator semistable L\'evy process in $\rd$ with $P_{X(t)}=\mu_t$ for $t>0$. It is well known that integrability properties of the Fourier transform $\widehat{\mu_t}$ imply the existence and certain smoothness properties of a Lebesgue density  of $\mu_t$.  In fact, $|\widehat{\mu_t}|$ has at least exponential decay in radial directions for every $t>0$, i.e.
\begin{equation}\label{expdecay}
\left|\widehat{\mu_t}(x)\right|=\left|\widehat{\mu_1}(x)\right|^t\leq\exp\left(-tK\|x\|^{1/m}\right)\quad\text{ if }\|x\|>M,
\end{equation}
where $m\in\nat$, $M>0$, $K>0$ are certain constants not depending on $t$. For an operator semistable L\'evy process without Gaussian component (i.e. $\alpha_1<2$) this follows directly from equation (2.4) in \cite{Luc}. In case $\alpha_1=2$ the spectral component $X^{(1)}(t)$ has a centered Gaussian distribution with positive definite covariance matrix $\Sigma=R^\top R$ according to fullness. Hence
\begin{align*}
\widehat{P_{X^{(1)}(t)}}(x_1) & =\exp\left(-\frac12\, t\|Rx_1\|^2\right)\leq\exp\left(-\frac1{2\|R^{-1}\|}\, t\|x_1\|^2\right)\\
& =\exp\left(-t\,C_1\|x_1\|^2\right).
\end{align*}
By the L\'evy-Khintchine representation, $X^{(1)}(t)$ is independent of $X^{(2)}(t)+\cdots+X^{(p)}(t)$ and together with equation (2.4) in \cite{Luc} we get for $\|x\|>M\geq1$
\begin{align*}
\left|\widehat{\mu_t}(x)\right| & \leq\exp\left(-t\,C_1\|x_1\|^2\right)\cdot\exp\left(-t\,C_2\|x_2+\cdots+x_p\|^{1/m}\right)\\
& =\exp\left(-t\,C_1(\|x_1\|^2)^{1/(2m)}\right)\cdot\exp\left(-t\,C_2\Big(\sum_{j=2}^p\|x_j\|^2\Big)^{1/(2m)}\right)\\
& \leq\exp\left(-tK\|x\|^{1/m}\right),
\end{align*}
where $K=\min(C_1,C_2)$. Thus we have also shown \eqref{expdecay} in case $X(t)$ has a Gaussian component.
According to Proposition 28.1 in \cite{Sat},  for every $t>0$ the random vector $X(t)$ has a Lebesgue density $x\mapsto g_t(x)$ of class $C^\infty(\rd)$ and $g_t(x)\to0$ as $\|x\|\to\infty$. We will additionally need certain uniformity results for the densities.

\begin{lemma}\label{densbound}
The mapping $(t,x)\mapsto g_t(x)$ is continuous on $(0,\infty)\times\rd$ and we have
\begin{equation}\label{supsup}
\sup_{t\in[1,c)}\sup_{x\in\rd}\left|g_t(x)\right|<\infty.
\end{equation}
\end{lemma}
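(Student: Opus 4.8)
The plan is to represent the density via Fourier inversion and then to read off both assertions from the uniform exponential decay bound \eqref{expdecay} combined with a dominated convergence argument. The decay \eqref{expdecay} shows that $\widehat{\mu_t}\in L^1(\rd)$ for every $t>0$, so Proposition 28.1 in \cite{Sat} is complemented by the explicit inversion formula
\[
g_t(x)=(2\pi)^{-d}\int_{\rd}e^{-i\langle x,\xi\rangle}\,\widehat{\mu_t}(\xi)\,d\xi,
\]
which is the representation I would work with throughout.

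For the uniform supremum bound \eqref{supsup}, I would simply estimate $|g_t(x)|\leq(2\pi)^{-d}\int_{\rd}|\widehat{\mu_t}(\xi)|\,d\xi$, a bound already independent of $x$, and split the integral at $\|\xi\|=M$. On $\{\|\xi\|\leq M\}$ one has $|\widehat{\mu_t}(\xi)|\leq1$ since $\widehat{\mu_t}$ is a characteristic function, contributing at most the finite volume of the ball; on $\{\|\xi\|>M\}$ the decay \eqref{expdecay} together with $t\geq1$ gives $|\widehat{\mu_t}(\xi)|\leq\exp(-K\|\xi\|^{1/m})$, which is integrable over $\rd$. Both contributions are independent of $t\in[1,c)$ and $x\in\rd$, so taking the supremum yields \eqref{supsup}.

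For continuity the statement is local, and $(0,\infty)\times\rd$ is exhausted by compact sets of the form $[a,b]\times\overline{B(0,\rho)}$ with $0<a\leq b$; I would fix such a set and invoke the standard continuity theorem for parameter-dependent integrals. For each fixed $\xi$ the integrand $(t,x)\mapsto e^{-i\langle x,\xi\rangle}\widehat{\mu_t}(\xi)$ is continuous in $(t,x)$: the phase factor is continuous in $x$, while $\widehat{\mu_t}(\xi)=\widehat{\mu_1}(\xi)^t$ is well defined and continuous (indeed holomorphic) in $t$, because the characteristic function $\widehat{\mu_1}$ of an infinitely divisible law has no zeros, so $\widehat{\mu_1}(\xi)^t=\exp\bigl(t\log\widehat{\mu_1}(\xi)\bigr)$. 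On the chosen compact set the integrand is dominated, uniformly in $(t,x)$, by the $\xi$-integrable function equal to $1$ on $\{\|\xi\|\leq M\}$ and to $\exp(-aK\|\xi\|^{1/m})$ on $\{\|\xi\|>M\}$, where the lower bound $t\geq a>0$ is used in \eqref{expdecay}. Dominated convergence then gives continuity of $(t,x)\mapsto g_t(x)$ on each such compact set, hence on all of $(0,\infty)\times\rd$.

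There is no serious obstacle here; the only point requiring a little care is to exhibit a single integrable dominating function valid on the whole compact parameter block, and this is precisely what the uniform lower bound $t\geq a>0$ combined with \eqref{expdecay} provides. The splitting of the integral and the verification of the dominating function are routine, and I would not dwell on them.
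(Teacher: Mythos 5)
Your proof is correct, and the continuity part follows the paper's route exactly: Fourier inversion plus dominated convergence, with your explicit dominating function (a constant on $\{\|\xi\|\leq M\}$ and $\exp(-aK\|\xi\|^{1/m})$ outside, using $t\geq a$) filling in a detail the paper leaves implicit. Where you genuinely diverge is in the proof of \eqref{supsup}: the paper deduces it from the assertion that $t\mapsto\|g_t\|_\infty$ is continuous on the compact interval $[1,c]$, whereas you bound $\sup_{x}|g_t(x)|$ directly by $(2\pi)^{-d}\|\widehat{\mu_t}\|_{L^1}$ and estimate that $L^1$-norm uniformly over $t\in[1,c)$ by splitting the integral at $\|\xi\|=M$. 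Your version is arguably the more self-contained one: continuity of the pointwise map $(t,x)\mapsto g_t(x)$ does not by itself yield continuity of the supremum over the noncompact set $\rd$; the paper's claim really rests on the convergence $g_{t_n}\to g_t$ being uniform in $x$, which is only made explicit later (in the proof of Lemma \ref{positive}) via the bound $|g_{t_n}(x)-g_t(x)|\leq(2\pi)^{-d}\int_{\rd}\bigl|1-\widehat{\mu_1}(y)^{|t_n-t|}\bigr|\,d\lambda^d(y)$. Your direct estimate sidesteps that point entirely at no extra cost, so both arguments are sound but yours requires one less unstated lemma.
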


\begin{proof}
For any sequence $(t_n,x_n)\to(t,x)$ in $(0,\infty)\times\rd$ by Fourier inversion and dominated convergence we have
$$g_{t_n}(x_n)=(2\pi)^{-d}\int_{\rd}e^{-i\langle x_n,y\rangle}\widehat{\mu_{t_n}}(y)\,d\lambda^d(y)\to(2\pi)^{-d}\int_{\rd}e^{-i\langle x,y\rangle}\widehat{\mu_{t}}(y)\,d\lambda^d(y)=g_t(x),$$
where $\lambda^d$ denotes Lebesgue measure on $\rd$. This shows continuity of  $(t,x)\mapsto g_t(x)$. Moreover, $\|g_t\|_\infty=\sup_{x\in\rd}\left|g_t(x)\right|$ is continuous in $t>0$, hence \eqref{supsup} follows.
\end{proof}

Consequently, we get a refinement of Lemma 3.1 in \cite{BMS} on the existence of negative moments of an operator semistable L\'evy process $X=\{X(t)\}_{t\geq0}$ in $\rd$.

\begin{lemma}\label{negmom}
For any $\delta\in(0,d)$ we have
$$\sup_{t\in[1,c)}\Exp\left[\|X(t)\|^{-\delta}\right]<\infty.$$
\end{lemma}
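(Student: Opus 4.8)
The plan is to bound the negative moment by splitting the integral over $\rd$ according to whether the density $g_t$ is evaluated near the origin or far from it. Concretely, I would write
\begin{equation*}
\Exp\left[\|X(t)\|^{-\delta}\right]=\int_{\rd}\|x\|^{-\delta}\,g_t(x)\,d\lambda^d(x)=\int_{\|x\|\leq1}+\int_{\|x\|>1}.
\end{equation*}
The tail integral $\int_{\|x\|>1}\|x\|^{-\delta}g_t(x)\,d\lambda^d(x)$ is bounded by $\int_{\|x\|>1}g_t(x)\,d\lambda^d(x)\leq1$ since $g_t$ is a probability density, and this bound is uniform in $t$. So the whole difficulty is concentrated in the region $\{\|x\|\leq1\}$ near the singularity of $\|x\|^{-\delta}$.

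For the near-origin piece, the idea is to control $g_t$ by its supremum and then exploit that $\|x\|^{-\delta}$ is integrable on the unit ball precisely because $\delta<d$. Using Lemma~\ref{densbound}, set $K_1=\sup_{t\in[1,c)}\sup_{x\in\rd}|g_t(x)|<\infty$. Then
\begin{equation*}
\int_{\|x\|\leq1}\|x\|^{-\delta}\,g_t(x)\,d\lambda^d(x)\leq K_1\int_{\|x\|\leq1}\|x\|^{-\delta}\,d\lambda^d(x).
\end{equation*}
Passing to polar coordinates, the remaining integral equals a constant times $\int_0^1 r^{-\delta}r^{d-1}\,dr=\int_0^1 r^{d-1-\delta}\,dr$, which is finite exactly when $d-1-\delta>-1$, i.e. $\delta<d$. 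This is where the hypothesis $\delta\in(0,d)$ is used, and the bound obtained is independent of $t$.

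Combining the two pieces yields
\begin{equation*}
\sup_{t\in[1,c)}\Exp\left[\|X(t)\|^{-\delta}\right]\leq 1+K_1\cdot\frac{\omega_{d-1}}{d-\delta}<\infty,
\end{equation*}
where $\omega_{d-1}$ denotes the surface area of the unit sphere in $\rd$. The main (and really only) obstacle is ensuring the supremum of the density is finite uniformly over $t\in[1,c)$, but that is exactly the content of Lemma~\ref{densbound}, which has already been established; the rest is the elementary integrability of $\|x\|^{-\delta}$ over the unit ball. I expect no genuine difficulty here, and the proof should be short.
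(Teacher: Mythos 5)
Your proof is correct and follows essentially the same route as the paper: split the integral at the unit ball, bound the tail by $1$ using that $g_t$ is a probability density, and bound the near-origin piece by the uniform supremum of the density from Lemma~\ref{densbound} times the integral of $\|x\|^{-\delta}$ over the unit ball, which is finite since $\delta<d$. The only cosmetic difference is that you make the polar-coordinate computation explicit where the paper simply asserts finiteness of $\int_{\{\|x\|\leq1\}}\|x\|^{-\delta}\,dx$.
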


\begin{proof}
Let $g_t$ be as before and define $K=\sup_{t\in[1,c)}\sup_{x\in\rd}\left|g_t(x)\right|$, then $K<\infty$ by Lemma \ref{densbound}. In view of $\delta<d$ we have for every $t\in[1,c)$
\begin{align*}
\Exp\left[\|X(t)\|^{-\delta}\right] & =\int_\rd\|x\|^{-\delta}g_t(x)\,dx\\
& \leq K\int_{\{\|x\|\leq 1\}}\|x\|^{-\delta}\,dx+\int_{\{\|x\|>1\}}g_t(x)\,dx\\
& \leq K\int_{\{\|x\|\leq 1\}}\|x\|^{-\delta}\,dx+1<\infty.
\end{align*}
Since this upper bound is independent of $t\in[1,c)$, the assertion follows.
\end{proof}

By a result of Sharpe \cite{Sha},  for a one-dimensional $(c^{1/\alpha},c)$-semistable L\'evy process we can further deduce from Lemma \ref{densbound} that the positivity set $A_t=\{x\in\rr:\,g_t(x)>0\}$ is either the whole real line $\rr$ or a half line $(at,\infty)$ or $(-\infty,at)$ for some $a\in\rr$ and for all $t>0$ . We will now use a similar argument as given on page 83 in \cite{ARRS} to show that in case $\alpha>1$ we have $g_t(0)>0$. If $A_t=\rr$ there is nothing to prove. Suppose that $A_t=(at,\infty)$ for some $a\geq0$. Let $(Y_n)_{n\in\nat}$ be an i.i.d. sequence with $Y_1\stackrel{\rm d}{=}X(t)$. Since $\alpha>1$ we have $\Exp[|Y_1|]<\infty$ and from the strong law of large numbers it follows that for every sequence of positive integers $k_n\to\infty$ we have
\begin{equation}\label{ndoa}
k_n^{-1/\alpha}\sum_{j=1}^{k_n}Y_j\geq k_n^{-1/\alpha}\sum_{j=1}^{\lfloor k_n^{1/\alpha}\rfloor}Y_j\to\Exp[Y_1]=\Exp[X(t)]
\end{equation}
almost surely. On the other hand, since $X(t)$ belongs to its own domain of normal attraction, for $k_n=\lfloor c^n\rfloor$ the left-hand side of \eqref{ndoa} converges in distribution to $X(t)$. It follows that $X(t)\geq\Exp[X(t)]$ almost surely, thus $X(t)=\Exp[X(t)]$ almost surely in contradiction to the fullness of $X(t)$. Hence we must have $a<0$ which implies $g_t(0)>0$. Similarly, the assumption $A_t=(-\infty,at)$ for some $a\leq0$ leads to $X(t)\leq\Exp[X(t)]$ almost surely and again contradicts the fullness of $X(t)$, hence $a>0$ which again implies $g_t(0)>0$. Alltogether we have shown that a bounded continuous density of a $(c^{1/\alpha},c)$-semistable L\'evy process with $\alpha>1$ is of type A; cf. Taylor \cite{Tay}. In the sequel we will  need a more general positivity result for a bounded continuous density of certain operator semistable L\'evy processes.

\begin{lemma}\label{positive}
Let $\{X(t)\}_{t\geq0}$ be an operator semistable L\'evy process with $\alpha_1>1$, $d_1=1$ and with density $g_t$ as above. Then there exist constants $K>0$, $r>0$ and uniformly bounded Borel sets $J_t\subseteq\rr^{d-1}\cong V_2\oplus\cdots\oplus V_p$ for $t\in[1,c)$ such that
$$g_t(x_1,\ldots,x_p)\geq K>0\quad\text{ for all }(x_1,\ldots,x_p)\in[-r,r]\times J_t.$$
Further, we can choose $\{J_t\}_{t\in[1,c)}$ such that  $\lambda^{d-1}(J_t)\geq R>0$ for every $t\in[1,c)$. Note that the constants $K,r$ and $R$ do not depend on $t\in[1,c)$. 
\end{lemma}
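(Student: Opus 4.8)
The plan is to reduce the statement to a positivity property of the one-dimensional first marginal together with a uniform spreading argument based on the density bounds already established. Write $x=(x_1,x')\in V_1\oplus(V_2\oplus\cdots\oplus V_p)\cong\rr\times\rr^{d-1}$, using $d_1=1$, and let $g_t^{(1)}$ denote the Lebesgue density of the first spectral component $X^{(1)}(t)$, which is a one-dimensional $(c^{1/\alpha_1},c)$-semistable L\'evy process with $\alpha_1>1$. By the type~A property derived just before the lemma, $g_t^{(1)}(0)>0$ for every $t>0$. Since the Fourier-inversion argument of Lemma~\ref{densbound} applies verbatim to $X^{(1)}$, the map $(t,x_1)\mapsto g_t^{(1)}(x_1)$ is continuous on $(0,\infty)\times\rr$, so $t\mapsto g_t^{(1)}(0)$ is continuous and strictly positive on the compact interval $[1,c]$ and hence $\inf_{t\in[1,c]}g_t^{(1)}(0)=:2\gamma>0$. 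As $\int_{\rr^{d-1}}g_t(0,x')\,d\lambda^{d-1}(x')=g_t^{(1)}(0)\geq2\gamma$ for all $t\in[1,c]$, the slice $x'\mapsto g_t(0,x')$ carries a uniformly positive amount of mass.

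Next I would confine this mass to a fixed ball. Differentiating the Fourier inversion formula, the slice $x'\mapsto g_t(0,x')$ is, up to a constant, the inverse Fourier transform in $x'$ of $y'\mapsto\int_\rr\widehat{\mu_t}(y_1,y')\,dy_1$. By the uniform exponential decay \eqref{expdecay}, for $t\in[1,c]$ each $y'$-derivative of $\widehat{\mu_t}$ is dominated by an expression of the form $P(y)\exp(-K\|y\|^{1/m})$ with $P$ a polynomial not depending on $t$; integrating in $y_1$ and then estimating the inverse transform shows that $\sup_{t\in[1,c]}|g_t(0,x')|$ decays faster than any power of $\|x'\|$. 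In particular there is a radius $L>0$, independent of $t$, with $\int_{\|x'\|>L}g_t(0,x')\,d\lambda^{d-1}(x')\leq\gamma$, so that $\int_{\|x'\|\leq L}g_t(0,x')\,d\lambda^{d-1}(x')\geq\gamma$ for every $t\in[1,c]$. Denoting by $V_L$ the volume of $\{\|x'\|\leq L\}\subseteq\rr^{d-1}$, there exists for each $t\in[1,c]$ a point $x_0'(t)$ with $\|x_0'(t)\|\leq L$ and $g_t(0,x_0'(t))\geq\gamma/V_L=:2\eta>0$.

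Finally I would spread this single-point lower bound over a product neighborhood using a uniform modulus of continuity. Again from \eqref{expdecay}, the integral $\int_\rd\|y\|\,|\widehat{\mu_t}(y)|\,d\lambda^d(y)$ is bounded uniformly in $t\in[1,c]$, so Fourier inversion yields a Lipschitz constant $L_0$ for $g_t$ that does not depend on $t$. Consequently $g_t\geq\eta$ on the Euclidean ball of radius $\eta/L_0$ centered at $(0,x_0'(t))$. Fixing $r,\rho>0$ with $\sqrt{r^2+\rho^2}\leq\eta/L_0$ and setting $J_t=\{x'\in\rr^{d-1}:\|x'-x_0'(t)\|\leq\rho\}$, I obtain $g_t(x_1,x')\geq\eta=:K$ for all $(x_1,x')\in[-r,r]\times J_t$. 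Here $J_t\subseteq\{\|x'\|\leq L+\rho\}$ is uniformly bounded and $\lambda^{d-1}(J_t)$ equals the fixed volume $R>0$ of a $\rho$-ball in $\rr^{d-1}$, so all constants are independent of $t\in[1,c]\subseteq[1,c]$, as required.

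The main obstacle is the uniform tail estimate of the second step: the positivity of $g_t^{(1)}(0)$ and the uniform Lipschitz bound follow fairly directly from the type~A property and the uniform exponential decay \eqref{expdecay}, but controlling the slices $g_t(0,\cdot)$ simultaneously for all $t\in[1,c]$ requires turning the pointwise decay of the densities into a rate that is genuinely uniform in $t$. It is precisely this uniformity that makes the confinement to a fixed ball — and thereby the uniform lower bound $R$ on $\lambda^{d-1}(J_t)$ — possible.
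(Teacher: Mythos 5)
Your overall architecture is sound and genuinely different from the paper's: you build $J_t$ as a fixed-radius ball around a well-chosen point $(0,x_0'(t))$ using a $t$-uniform Lipschitz bound for $g_t$, whereas the paper defines $J_t$ directly as a superlevel set $\{x':\,g_t(x_1,x')\geq K\ \forall x_1\in[-r,r]\}$ and establishes $\lambda^{d-1}(J_t)\geq R$ and uniform boundedness by two compactness-and-contradiction arguments on $[1,c]$. Your steps 1 and 3 are correct: positivity and continuity of $g_t^{(1)}(0)$ give the uniform slice mass $2\gamma$, and the uniform bound on $\int_{\rd}\|y\|\,|\widehat{\mu_t}(y)|\,d\lambda^d(y)$ coming from \eqref{expdecay} does give a Lipschitz constant for $g_t$ independent of $t\in[1,c]$, so that a pointwise lower bound at $(0,x_0'(t))$ spreads to a product neighbourhood of fixed size and fixed measure $R$.

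The genuine gap is the tail-confinement step, exactly where you locate the main obstacle, and the route you propose through it fails. You claim that every $y'$-derivative of $\widehat{\mu_t}$ is dominated by $P(y)\exp(-K\|y\|^{1/m})$ and conclude that $g_t(0,x')$ decays faster than any power of $\|x'\|$. But $\widehat{\mu_t}=\widehat{\mu_1}^{\,t}$ is \emph{not} smooth unless $X$ is Gaussian: differentiability of a characteristic function of order $k$ at the origin forces moments of order $k$ (up to parity), while a non-Gaussian operator semistable law has $\Exp[\|X(t)\|^\beta]=\infty$ for $\beta\geq\alpha_p$, with $\alpha_p<2$. Correspondingly the conclusion itself is false: e.g.\ for independent stable components with indices $\alpha_1>1>\alpha_2$ one has $g_t(0,x_2)=g_t^{(1)}(0)\,g_t^{(2)}(x_2)\sim C|x_2|^{-1-\alpha_2}$, only polynomial decay. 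What you actually need is much weaker, namely one radius $L$ with $\int_{\|x'\|>L}g_t(0,x')\,d\lambda^{d-1}(x')\leq\gamma$ \emph{uniformly} in $t\in[1,c]$, and this is true but must be obtained differently: for instance, $g_{t_n}(0,\cdot)\to g_{t_0}(0,\cdot)$ pointwise with $\int g_{t_n}(0,x')\,dx'=g_{t_n}^{(1)}(0)\to g_{t_0}^{(1)}(0)$ gives $L^1$-convergence of the slices by Scheff\'e's lemma, so $t\mapsto\int_{\|x'\|>L}g_t(0,x')\,dx'$ is continuous on $[1,c]$, decreases to $0$ as $L\to\infty$, and Dini's theorem yields the uniform radius; alternatively one can argue by contradiction along a convergent subsequence $t_n\to t_0$ as the paper does for the uniform boundedness of $J_t$, using only Lemma \ref{densbound}, the uniform convergence $\|g_{t_n}-g_{t_0}\|_\infty\to0$, and $g_{t_0}(x)\to0$ as $\|x\|\to\infty$. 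With the tail step repaired in this way your proof goes through.
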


\begin{proof}
As argued above, $(t, x_1)\mapsto g_t(x_1)=\int_{\rr^{d-1}}g_t(x_1,\ldots,x_p)\,d\lambda^{d-1}(x_2,\ldots,x_p)$ is continuous and positive in $x_1=0$ for every $t>0$, hence $\min_{t\in[1,c]}g_t(x_1)>0$ for $x_1=0$. Choose $\delta>0$ and $r>0$ such that $g_t(x_1)\geq\delta$ for every $x_1\in[-r,r]$ and $t\in[1,c]$. We will now show that we can choose $K\in(0,\delta)$ and $R>0$ such that for every $t\in[1,c)$ the Borel set 
$$J_t=\left\{(x_2,\ldots,x_p)\in\rr^{d-1}:\,g_t(x_1,\ldots,x_p)\geq K\text{ for every }x_1\in[-r,r]\right\}$$
fulfills $\lambda^{d-1}(J_t)\geq R$. Assume this choice is not possible. Then for every $K\in(0,\delta)$ and $R>0$ there exists $t=t(K,R)\in[1,c)$ such that $\lambda^{d-1}(J_t)<R$. Letting $K\downarrow0$ and $R\downarrow0$, there exists a subsequence such that $t(K,R)\to t_0\in[1,c]$ along this subsequence and we have $g_{t_0}(x_1,\ldots,x_p)=0$ for some $x_1\in[-r,r]$ and Lebesgue almost every $(x_2,\ldots,x_p)\in\rr^{d-1}$. It follows that 
$g_{t_0}(x_1)=0$ in contradiction to $g_{t_0}(x_1)\geq\delta$. It remains to prove that $\{J_t\}_{t\in[1,c)}$ is uniformly bounded. First note that by Fourier inversion for $t_n\to t>0$ we have
\begin{align*}
\left|g_{t_n}(x)-g_t(x)\right| & =(2\pi)^{-d}\left|\int_{\rd}e^{-i\langle x,y\rangle}\left(\widehat\mu(y)^{t_n}-\widehat\mu(y)^{t}\right)\,d\lambda^d(y)\right|\\
& \leq (2\pi)^{-d}\int_{\rd}\left|1-\widehat\mu(y)^{|t_n-t|}\right|\,d\lambda^d(y)\to0
\end{align*}
uniformly in $x\in\rd$, since the upper bound does not depend on $x$. Now assume that $\{J_t\}_{t\in[1,c)}$ is not uniformly bounded. Then for every $n\in\nat$ there exists $t_n\in[1,c)$ such that for some $(x_2^{(n)},\ldots,x_p^{(n)})\in\rr^{d-1}$ with $\|(x_2^{(n)},\ldots,x_p^{(n)})\|\geq n$ we have
$$g_{t_n}\big(x_1,x_2^{(n)},\ldots,x_p^{(n)}\big)\geq K\quad\text{ for every }x_1\in[-r,r].$$
Now choose a subsequence $t_n\to t_0\in[1,c]$ and choose $n\in\nat$ large enough so that $\left|g_{t_n}(x)-g_{t_0}(x)\right|\leq K/2$ for every $x\in\rd$. Then we get along this subsequence
$$g_{t_0}\big(0,x_2^{(n)},\ldots,x_p^{(n)}\big)\geq K/2$$
which contradicts $g_{t_0}(x)\to0$ for $\|x\|\to \infty$ and concludes the proof.
\end{proof}

\subsection{Bounds for the sojourn time}

Let $K_{1}>0$ be a fixed constant. A family $\Lambda(a)$ of cubes of side $a$ in $\rd$ is called $K_1$-nested if no ball of radius $a$ in $\rd$ can intersect more than $K_{1}$ cubes of $\Lambda(a)$. In the sequel we will choose $\Lambda(a)$ to be the family of all cubes in $\rd$ of the form $[k_{1}a, (k_{1}+1)a]\times\cdots\times[k_{d}a, (k_{d}+1)a]$ with $(k_{1},\ldots,k_{d})\in\mathbb{Z}^{d}$. Obviously, this family $\Lambda(a)$ is $3^d$-nested. Let
\begin{align*}
	T(a,s)=\int_{0}^{s}1_{B(0,a)}(X(t))\,dt
\end{align*}
be the sojourn time of the L\'evy process $X=\{X(t)\}_{t\geq0}$ up to time $s>0$ in the closed ball $B(0,a)$ with radius $a$ centered at the origin. The following remarkable covering lemma is due to Pruitt und Taylor \cite[Lemma 6.1]{PT}.

\begin{lemma}\label{PT}
Let $X=\{X(t)\}_{t\geq0}$ be a  L\'evy process in $\rd$ and let $\Lambda(a)$ be a fixed $K_{1}$-nested family of cubes in $\rd$ of side $a$ with $0<a\leq1$. For any $u\geq0$ let $M_{u}(a,s)$ be the number of cubes in $\Lambda(a)$ hit by $X(t)$ at some time $t\in [u,u+s]$. Then
$$\Exp\left[M_{u}(a,s)\right]\leq 2\,K_{1}s\cdot\left(\Exp\left[T\left(\tfrac{a}{3},s\right)\right]\right)^{-1}.$$
\end{lemma}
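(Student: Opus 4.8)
The plan is to represent each cube counted by $M_u(a,s)$ through its first hitting time after time $u$, and then to use the strong Markov property to trade the count of cubes against the expected sojourn time $\Exp[T(a/3,s)]$. For each cube $Q\in\Lambda(a)$ I would set $\tau_Q=\inf\{t\geq u:\,X(t)\in Q\}$; since $X$ is c\`adl\`ag and $Q$ is closed this is a stopping time, the hitting event $\{Q\text{ hit}\}=\{\tau_Q\leq u+s\}$ lies in $\mathcal F_{\tau_Q}$, and on it $X(\tau_Q)\in Q$. Then $M_u(a,s)=\sum_{Q\in\Lambda(a)}1_{\{\tau_Q\leq u+s\}}$, a sum over the countably many cubes.

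First I would obtain an exact per-cube identity. For a hit cube consider the sojourn
$$\int_{\tau_Q}^{\tau_Q+s}1_{B(X(\tau_Q),a/3)}(X(t))\,dt$$
of the process over a time window of length $s$ started at $\tau_Q$, in the ball of radius $a/3$ centred at the entry point $X(\tau_Q)$. By the strong Markov property the increments $X(t)-X(\tau_Q)$ for $t\geq\tau_Q$ form an independent copy of $X$, so the conditional expectation of this integral given $\mathcal F_{\tau_Q}$ equals $\Exp[T(a/3,s)]$. Multiplying by the $\mathcal F_{\tau_Q}$-measurable indicator $1_{\{Q\text{ hit}\}}$, taking expectations and summing over all cubes yields
$$\Exp\Big[\sum_{Q}1_{\{Q\text{ hit}\}}\int_{\tau_Q}^{\tau_Q+s}1_{B(X(\tau_Q),a/3)}(X(t))\,dt\Big]=\Exp[M_u(a,s)]\cdot\Exp[T(a/3,s)].$$

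Next I would bound the same quantity from above geometrically. Interchanging sum and integral (all terms nonnegative) gives
$$\sum_{Q}1_{\{Q\text{ hit}\}}\int_{\tau_Q}^{\tau_Q+s}1_{B(X(\tau_Q),a/3)}(X(t))\,dt=\int_u^{u+2s}\#\big\{Q\text{ hit}:\,t-s\leq\tau_Q\leq t,\ X(t)\in B(X(\tau_Q),a/3)\big\}\,dt,$$
where $\tau_Q\in[u,u+s]$ confines $t$ to $[u,u+2s]$. The key geometric observation is that $X(t)\in B(X(\tau_Q),a/3)$ together with $X(\tau_Q)\in Q$ forces $Q$ to meet the ball $B(X(t),a/3)$; as this ball has radius $a/3\leq a$, the $K_1$-nested property bounds the number of such cubes $Q$ by $K_1$ for every fixed $t$. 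Hence the integrand never exceeds $K_1$, the whole expression is at most $2K_1s$, and comparison with the identity above gives $\Exp[M_u(a,s)]\cdot\Exp[T(a/3,s)]\leq 2K_1s$, which is the claim.

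The hard part will be the bookkeeping of the time window rather than any deep estimate. One cannot integrate merely up to the fixed horizon $u+s$: a cube first entered close to time $u+s$ would then be allotted almost no sojourn time and the Markov lower bound would collapse. Running each window for the full length $s$ beyond its own entry time $\tau_Q$ restores the exact value $\Exp[T(a/3,s)]$ per cube, at the price of extending the integration interval to $[u,u+2s]$ and thereby producing the factor $2$. The remaining points---that $\tau_Q$ is a genuine stopping time with $\{Q\text{ hit}\}\in\mathcal F_{\tau_Q}$, and that the strong Markov identity applies to the sojourn integral through the stationary independent increments of $X$---are routine.
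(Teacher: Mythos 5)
Your proof is correct. The paper itself does not prove this lemma---it simply cites Pruitt and Taylor \cite{PT} (their Lemma 6.1)---and your argument is a faithful, self-contained reconstruction of exactly that classical proof: first-entrance times $\tau_Q$ for each cube, the strong Markov property to extract the factor $\Exp[T(a/3,s)]$ per hit cube, and $K_1$-nestedness applied to the ball $B(X(t),a/3)$ to bound the time-sliced count by $K_1$ over the extended window $[u,u+2s]$, which produces the factor $2$.
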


We now determine sharp upper and lower bounds for the expected sojourn times $\Exp[T(a,s)]$ of an operator semistable L\'evy process. Our proof follows the outline given in \cite[Lemma 3.4]{MX} for the special case of operator stable L\'evy processes, but in our more general situation the estimations are more delicate. Although we only need the lower bounds in this paper, for completeness we also include the upper bounds which might be useful elsewhere, e.g. for studying exact Hausdorff measure functions. Recall the spectral decomposition of Section 2.2 for the constants $\alpha_1,\alpha_2$ and $d_1$ appearing in the following result.

\begin{theorem}\label{sojournbounds}
Let $X=\{X(t)\}_{t\geq0}$ be an operator semistable L\'evy process in $\rd$ with $d\geq2$. For any $0<\alpha_{2}'<\alpha_{2}<\alpha_{2}''<\alpha_{1}'<\alpha_{1}<\alpha_{1}''$ there exist positive and finite constants $K_{2},\ldots,K_{5}$ such that
\begin{itemize}
	\item[(i)] if $\alpha_{1} \leq d_{1}$, then for all $0<a\leq 1$ and $a^{\alpha_{1}}\leq s\leq1$ we have
		\begin{equation*}
			K_{2}a^{\alpha_{1}''}\leq\Exp[T(a,s)]\leq K_{3}a^{\alpha_{1}'}.
		\end{equation*}
	\item[(ii)] if $\alpha_{1}>d_{1}=1$, for all $0<a\leq a_{0}$ with $a_0>0$ sufficiently small, and all $a^{\alpha_{2}}\leq s \leq 1$ we have
		\begin{equation*}
			K_{4}a^{\rho''}\leq\Exp[T(a,s)]\leq K_{5}a^{\rho'},
		\end{equation*}
		where $\rho'=1+\alpha_{2}'(1-\frac{1}{\alpha_{1}})$ and $\rho''=1+\alpha_{2}''(1-\frac{1}{\alpha_{1}})$.
\end{itemize}
\end{theorem}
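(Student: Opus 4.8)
The plan is to reduce the theorem to pointwise bounds on $t\mapsto\mathbb{P}(\|X(t)\|\le a)$, since by Tonelli $\Exp[T(a,s)]=\int_0^s\mathbb{P}(\|X(t)\|\le a)\,dt$. Writing $t=c^{-n}m$ with $m\in[1,c)$ and $n\in\nat$, the semi-selfsimilarity \eqref{soss} gives $X(t)\stackrel{\rm d}{=}c^{-nE}X(m)$ and hence $g_t(x)=c^{nq}g_m(c^{nE}x)$ with $q=\operatorname{tr}E=\sum_j d_j/\alpha_j$. As the determinant factor is exact and $c^{-n}=t/m\asymp t$, Lemma~\ref{densbound}, applied to $X$ and to its spectral subprocesses (which are again operator semistable), yields the clean sup-norm bounds $\|g_t^{(1)}\|_\infty\le Kt^{-d_1/\alpha_1}$ and, for the joint density $h_t$ of $(X^{(1)},X^{(2)})$, $\|h_t\|_\infty\le Kt^{-(d_1/\alpha_1+d_2/\alpha_2)}$, all uniform in the residue $m$. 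Together with the growth estimates $\|c^{-nE_j}\|\le Kt^{a_j-\varepsilon}$ from Lemma~\ref{specbound} (valid since $c^{-n}\le t\le1$), these are the only analytic inputs.

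For the upper bounds I would insert the relevant sup-norm bound over the ball and integrate in $t$ over dyadic-type ranges. In case (i), $\mathbb{P}(\|X(t)\|\le a)\le\mathbb{P}(\|X^{(1)}(t)\|\le a)\le\min\{1,Kt^{-d_1/\alpha_1}a^{d_1}\}$; splitting $\int_0^s$ at $t=a^{\alpha_1}$ gives $\Exp[T(a,s)]\le Ka^{\alpha_1}\le Ka^{\alpha_1'}$ when $\alpha_1<d_1$, while at the borderline $\alpha_1=d_1$ a logarithmic factor appears that is absorbed into the smaller exponent $\alpha_1'<\alpha_1$. In case (ii), where $d_1=1$ and the one-block bound alone is too weak on $[a^{\alpha_2},s]$, I combine the one-block bound on $[a^{\alpha_1},a^{\alpha_2}]$ with the two-block bound $\mathbb{P}(\|X^{(1)}(t)\|\le a,\|X^{(2)}(t)\|\le a)\le Kt^{-(1/\alpha_1+d_2/\alpha_2)}a^{1+d_2}$ on $[a^{\alpha_2},s]$, and the trivial bound on $[0,a^{\alpha_1}]$ (where $a^{\alpha_1}\le a^{\rho}$ because $\alpha_1-\rho=(\alpha_1-1)(\alpha_1-\alpha_2)/\alpha_1>0$). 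Since $1/\alpha_1<1<1/\alpha_1+d_2/\alpha_2$, each piece is controlled by the endpoint $t=a^{\alpha_2}$ and evaluates to a constant times $a^{\rho}$ with $\rho=1+\alpha_2(1-\tfrac1{\alpha_1})$, hence a fortiori to $K_5a^{\rho'}$.

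For the lower bound in case (i) I would show that $X(t)$ concentrates in $B(0,a)$ once $t\le a^{\alpha_1''}$: from $\|X^{(j)}(t)\|\le Kt^{a_j-\varepsilon}\|X^{(j)}(m)\|$ and a choice of $\varepsilon$ with $\alpha_1''(a_1-\varepsilon)>1$, the event $\{\|X(t)\|\le a\}$ contains $\bigcap_j\{\|X^{(j)}(m)\|\le L(a)\}$ with $L(a)\to\infty$, whose probability is bounded below by a positive constant uniformly in $m$ by the tightness of $\{P_{X^{(j)}(m)}:m\in[1,c]\}$ (continuity of $m\mapsto P_{X^{(j)}(m)}$ on the compact $[1,c]$). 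Integrating the resulting bound $\mathbb{P}(\|X(t)\|\le a)\ge\kappa$ over $[0,a^{\alpha_1''}]\subseteq[0,s]$ yields $\Exp[T(a,s)]\ge\kappa a^{\alpha_1''}$, the range of $a$ bounded away from $0$ being handled by monotonicity of $T(a,s)$ in $a$.

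The genuinely delicate step is the lower bound in case (ii), and this is where Lemma~\ref{positive} is indispensable: as the spectral blocks are dependent, no componentwise argument bounds the joint small-ball probability from below. Instead I would write $\mathbb{P}(\|X(t)\|\le a)=\int_{B(0,a)}c^{nq}g_m(c^{nE}x)\,dx$ and restrict the integrand to the positivity region, using $g_m\ge K$ on $[-r,r]\times J_m$ with $\lambda^{d-1}(J_m)\ge R$ and $J_m$ uniformly bounded (here $d\ge2$ guarantees the lower blocks are present). For $t$ in the range $Ka^{\alpha_1}\le t\le K'a^{\alpha_2''}$ the leading-block constraint $|x_1|\le rt^{1/\alpha_1}$ is no stronger than $|x_1|\le a$ (this needs $t\gtrsim a^{\alpha_1}$), while every $u\in J_m$ satisfies $\|c^{-nE_{\ge2}}u\|\le a$ (this needs $t\lesssim a^{\alpha_2''}$, the gap $\alpha_2''>\alpha_2$ again arising from the $\varepsilon$ in Lemma~\ref{specbound}); the change of variables $u=c^{nE_{\ge2}}x_{\ge2}$ then produces $\mathbb{P}(\|X(t)\|\le a)\ge Ka\,t^{-1/\alpha_1}$, and integrating over this range, whose right endpoint lies below $s$ once $a\le a_0$, gives $\Exp[T(a,s)]\ge K_4a^{\rho''}$. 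The main obstacle throughout is matching the anisotropic image $c^{nE}(B(0,a))$ to the product-shaped positivity set block by block while keeping all constants uniform in $m\in[1,c)$; the boundary exponents $\alpha_1',\alpha_1'',\rho',\rho''$ are precisely the slack needed to absorb the $\varepsilon$-losses of Lemma~\ref{specbound} and the logarithmic factor at $\alpha_1=d_1$.
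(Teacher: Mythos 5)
Your proposal is correct and follows the same overall architecture as the paper: the multiplicative decomposition $t=c^{-n}m$ with $m\in[1,c)$, the uniform density bounds of Lemma \ref{densbound} on the compact residue interval, a uniform small-ball positivity (the paper uses $P(\sup_{m\in[1,c)}\|X^{(j)}(m)\|\le K)\ge K_7$ via c\`adl\`ag paths, you use tightness of $\{P_{X(m)}\}_{m\in[1,c]}$ --- equivalent for this purpose) for the lower bound in (i), and Lemma \ref{positive} with the product positivity set $[-r,r]\times J_m$ for the lower bound in (ii), including the same final cancellation $K a^{\rho''}-K a^{\alpha_1}$ absorbed by taking $a_0$ small. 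The one genuine difference is in the upper bounds: the paper transports the \emph{ball} via the operator-norm estimates of Lemma \ref{specbound}, paying an $\varepsilon$-loss that produces the exponents $\alpha_1'$ and $\rho'$, whereas you transport the \emph{density} and use the exact Jacobian $\det c^{nE_1}=c^{nd_1/\alpha_1}$ (resp.\ $c^{n(1/\alpha_1+d_2/\alpha_2)}$ for the two-block density), which costs nothing and yields the sharper bounds $Ka^{\alpha_1}$ (up to a logarithm at the borderline $\alpha_1=d_1$, correctly absorbed into $\alpha_1'$) and $Ka^{\rho}$ with $\rho=1+\alpha_2(1-\frac1{\alpha_1})$, a fortiori the stated $K_3a^{\alpha_1'}$ and $K_5a^{\rho'}$. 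This is a legitimate and slightly stronger route for the upper bounds; for the lower bounds your block-by-block matching of $c^{nE}B(0,a)$ to $[-r,r]\times J_m$ on the window $Ka^{\alpha_1}\le t\le K'a^{\alpha_2''}$ is exactly the paper's argument, with the $\varepsilon$-losses of Lemma \ref{specbound} correctly parked in $\alpha_1''$ and $\alpha_2''$.
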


\begin{proof}
(i) Assume $\alpha_{1}\leq d_{1}$ and let $\alpha_{1}'<\alpha_{1}$ be fixed. Especially, we have $d_1/\alpha_1'-1>0$. For $0<t\leq 1$ write $t=mc^{-i}$ with $m\in [1,c)$ and $i \in \nat_{0}$, then by Lemma \ref{specbound} we have
\begin{equation}\label{lbX1}
	\|X^{(1)}(t)\|\stackrel{\rm d}{=} \|c^{-iE_1}X^{(1)}(m)\|\geq \|X^{(1)}(m)\|/\|c^{iE_{1}}\|\geq K^{-1}c^{-i/{\alpha_{1}'}}\|X^{(1)}(c^it)\|.
\end{equation}
For $0<a\leq1$ choose $i_{0},i_{1}\in \nat_{0}$ such that $c^{-(i_{0}+1)} <a\leq c^{-i_{0}}$ and $c^{-(i_{1}+1)} <c^{-i_{0}\alpha_{1}'}\leq c^{-i_{1}}$. Since $X^{(1)}$ is a $(c^{E_{1}},c)$-operator semistable L\'evy process in $\rr^{d_1}\cong V_1$, the spectral component $X^{(1)}(m)$ has a bounded and continuous density $g_{m}(x_1)$ for any $m\in[1,c)$ and by Lemma \ref{densbound} there exists
\begin{equation}\label{K6}
K_6=\sup_{m\in [1,c)}\sup_{ x_1\in\rr^{d_1}}|g_{m}(x_1)|<\infty.
\end{equation}
Alltogether we observe using \eqref{lbX1}
\begin{align*}
\Exp[T(a,s)] & \leq \int_{0}^{1}P\left(\|X^{(1)}(t)\|<a\right)\,d t\leq\sum_{i=1}^{\infty}\int_{c^{-i}}^{c^{-i+1}}P\left(\|X^{(1)}(t)\|\leq c^{-i_{0}}\right)\,d t\\
	& \leq \sum_{i=1}^{\infty}\int_{1}^{c}c^{-i}P\left(\|X^{(1)}(m)\|\leq K\,c^{i/\alpha_1'-i_{0}}\right)\,dm\\
	&\leq\sum_{i=1}^{i_{1}+1}c^{-i}\int_{1}^{c}\int_{\rr^{d_1}} 1_{\{\|x_1\|\leq K\,c^{i/\alpha_1'-i_{0}}\}}g_{m}(x_1)\,dx_1\,dm+\sum_{i=i_{1}+2}^{\infty}\int_{1}^{c}c^{-i}\,d m \\
	& \leq\sum_{i=1}^{i_{1}+1}c^{-i}(c-1)(2K\,c^{i/\alpha_1'-i_{0}})^{d_1}K_6+\sum_{i=i_{1}+2}^{\infty}(c-1)c^{-i}\\
	& \leq K\,c^{-i_0d_1}\frac{\left(c^{d_1/\alpha_1'-1}\right)^{i_1+2}-1}{c^{d_1/\alpha_1'-1}-1}+c^{-(i_1+1)}\\
	& \leq K\,c^{-i_0d_1}(c^{-i_1})^{1-d_1/\alpha_1'}+c^{-i_0\alpha_1'}\leq K_3a^{\alpha_1'}.
\end{align*}
which gives the upper bound in part (i) for all $0<s\leq1$. To prove the lower bound, choose $\alpha_{j}''>0$ for $1\leq j\leq p$ such that $\alpha_{j}''>\alpha_{j}>\alpha_{j+1}''$. For $0<a\leq1$ and $a^{\alpha_1}\leq s\leq1$ choose $i_{0},i_{1},i_2\in \nat_{0}$ such that $c^{-i_{0}} <a\leq c^{-i_{0}+1}$, $c^{-i_1}<s\leq c^{-i_{1}+1}$ and $c^{-(i_{2}+1)} <\left(c^{-i_{0}}\delta\right)^{\alpha_{1}''}\leq c^{-i_{2}}$, where $0<\delta\leq1$ will be chosen later. Note that 
$$c^{-i_{1}+1}\geq s\geq a^{\alpha_1}>a^{\alpha_1''}>c^{-i_0\alpha_1''}>\left(c^{-i_{0}}\delta\right)^{\alpha_{1}''}>c^{-(i_{2}+1)}$$
and hence $i_1-1\leq i_2+1$. Similar to \eqref{lbX1}, by Lemma \ref{specbound} we have
\begin{equation}\begin{split}\label{ubXj}
	\|X^{(j)}(t)\| & \stackrel{\rm d}{=} \|c^{-iE_j}X^{(j)}(m)\|\leq \|c^{-iE_j}\|\,\|X^{(1)}(m)\|\\
	& \leq K\,c^{-i/{\alpha_{j}''}}\|X^{(j)}(c^it)\|\leq K\,c^{-i/{\alpha_{1}''}}\|X^{(j)}(c^it)\|
\end{split}\end{equation}
for all $j=1,\ldots,p$.  Alltogether we observe, using \eqref{ubXj}
\begin{align*}
\Exp[T(a,s)] & \geq \int_{0}^{s}P\left(\|X^{(j)}(t)\|<\frac{a}{\sqrt{p}},\;1\leq j\leq p\right)\,d t\\
	& \geq \int_{0}^{c^{-i_{1}}}P\left(\|X^{(j)}(t)\|\leq\frac{c^{-i_{0}}}{\sqrt{p}},\;1\leq j\leq p\right)\,d t\\
	& = \sum_{i=i_{1}-1}^{\infty}\int_{c^{-i}}^{c^{-i+1}}P\left(\|X^{(j)}(c^it)\|\leq K^{-1}\frac{c^{i/\alpha_1''-i_{0}}}{\sqrt{p}},\;1\leq j\leq p\right)\,d t\\
	& \geq\sum_{i=i_{2}+1}^{\infty}c^{-i}\int_{1}^{c}P\left(\|X^{(j)}(m)\|\leq K^{-1}\frac{c^{(i_2+1)/\alpha_1''-i_{0}}}{\sqrt{p}},\;1\leq j\leq p\right)\,dm\\
	& \geq\sum_{i=i_{2}+1}^{\infty}c^{-i}\int_{1}^{c}P\left(\|X^{(j)}(m)\|\leq \frac{K^{-1}}{\delta\sqrt{p}},\;1\leq j\leq p\right)\,dm
\end{align*}
Since $\{X^{(j)}(t)\}_{t\geq 0}$, $1\leq j\leq p$, are L\'evy processes, we can assume that they have c\`adl\`ag paths. Hence $\sup_{m\in [1,c)} \|X^{(j)}(m)\| = \sup_{m\in [1,c)\cap \mathbb{Q}} \|X^{(j)}(m)\|$, $1\leq j\leq p$, are random variables and thus
$$P\left(\sup_{m\in [1,c)} \|X^{(j)}(m)\| \leq \frac{K^{-1}}{\delta\sqrt{p}},\;1\leq j\leq p\right)\geq K_{7}>0,$$
if we choose $0<\delta\leq1$ sufficiently small. Consequently,
\begin{align*}
\Exp[T(a,s)] & \geq\sum_{i=i_{2}+1}^{\infty}c^{-i}\int_{1}^{c}K_7\,dm= K_{7} \sum_{i=i_{2}+1}^{\infty}c^{-i}(c-1)=K_7c^{-i_2}\\
& \geq K (c^{-i_0}\delta)^{\alpha_1''}=K(\delta/c)^{\alpha_1''}a^{\alpha_1''}=K_2a^{\alpha_1''} 
\end{align*}
which proves the lower bound in part (i).
	
(ii) Now assume $\alpha_{1}>d_{1}=1$ and let $\alpha_{2}'<\alpha_{2}$ be fixed. Since $(X^{(1)}, X^{(2)})$ is a $(c^{E_{1}\oplus E_{2}},c)$-semistable L\'evy process in $\mathbb{R}^{d_{1}+d_{2}}\cong V_{1}\oplus V_{2}$, the spectral component $(X^{(1)}(m), X^{(2)}(m))$ has a bounded and continuous density $g_{m}(x_1,x_2)$ for any $m\in[1,c)$ and by Lemma \ref{densbound} there exists
\begin{equation}\label{K8}
K_8=\sup_{m\in [1,c)}\sup_{(x_{1},x_{2})\in\rr^{d_1+d_2}}|g_{m}(x_1,x_2)|<\infty.
\end{equation}
We will further use the constant $K_6$ defined by \eqref{K6} in part (i). For $0<a\leq1$ choose $i_{0},i_{1}\in \nat_{0}$ such that $c^{-(i_{0}+1)} <a\leq c^{-i_{0}}$ and $c^{-(i_{1}+1)} <c^{-i_{0}\alpha_{2}'}\leq c^{-i_{1}}$. For $0<t\leq 1$ again write $t=mc^{-i}$ with $m\in [1,c)$ and $i \in \nat_{0}$, then by Lemma \ref{specbound} we have
\begin{equation}\label{lbX2}
	\|X^{(2)}(t)\|\stackrel{\rm d}{=} \|c^{-iE_2}X^{(2)}(m)\|\geq \|X^{(2)}(m)\|/\|c^{iE_{2}}\|\geq K^{-1}c^{-i/{\alpha_{2}'}}\|X^{(2)}(c^it)\|.
\end{equation}
Alltogether we observe using \eqref{lbX2}
\begin{align*}
\Exp[T(a,s)] & \leq \int_{0}^{1} P\left(|X^{(1)}(t)|<a,\|X^{(2)}(t)\|<a\right)\,d t\\
	& \leq \sum_{i=1}^{\infty}\int_{c^{-i}}^{c^{-i+1}}P\left(|X^{(1)}(c^it)|<c^{i/\alpha_1-i_{0}},\|X^{(2)}(c^it)\|<K\,c^{i/\alpha_2'-i_{0}}\right)\,d t\\
	& \leq \sum_{i=1}^{i_1+1}c^{-i}\int_{1}^{c}P\left(|X^{(1)}(m)|<c^{i/\alpha_1-i_{0}},\|X^{(2)}(m)\|<K\,c^{i/\alpha_2'-i_{0}}\right)\,dm\\
	& \phantom{\leq}+\sum_{i=i_1+2}^{\infty}c^{-i}\int_{1}^{c}P\left(|X^{(1)}(m)|<c^{i/\alpha_1-i_{0}}\right)\,dm\\
	& =: I+I\!I.
\end{align*}
Note that for part $I$ we have $\alpha_{2}' < \alpha_{1} < 2$ and $ d_{2}\geq 1$, hence $1-\frac{1}{\alpha_{1}}-\frac{d_{2}}{\alpha_{2}'}<0$ and it follows that
\begin{align*}
I & \leq\sum_{i=1}^{i_{1}+1}c^{-i}(c-1)K_82c^{i/\alpha_1-i_{0}}\left(2c^{i/\alpha_2'-i_{0}}\right)^{d_{2}}\\
	&\leq Kc^{-i_{0}(d_{2}+1)}\sum_{i=1}^{i_{1}+1}\left(c^{-i}\right)^{1-\frac{1}{\alpha_{1}}-\frac{d_{2}}{\alpha_{2}'}}
	= Kc^{-i_{0}(d_{2}+1)}\left[\frac{\left(c^{-(i_{1}+2)}\right)^{1-\frac{1}{\alpha_{1}}-\frac{d_{2}}{\alpha_{2}'}}-1}{c^{\frac{1}{\alpha_{1}}+\frac{d_{2}}{\alpha_{2}'}-1}-1}\right]\\
	&\leq Kc^{-i_{0}(d_{2}+1)}\left(c^{-i_{1}}\right)^{1-\frac{1}{\alpha_{1}}-\frac{d_{2}}{\alpha_{2}'}}\left(c^{-2}\right)^{1-\frac{1}{\alpha_{1}}-\frac{d_{2}}{\alpha_{2}'}}
	\leq Kc^{-i_{0}(d_{2}+1)}\left(c^{-i_{0}\alpha_{2}'}\right)^{1-\frac{1}{\alpha_{1}}-\frac{d_{2}}{\alpha_{2}'}}\\
	&= K\left(c^{-i_{0}}\right)^{1+\alpha_{2}'(1-\frac{1}{\alpha_{1}})} = Kc^{-i_{0}\rho'}
	= Kc^{\rho'}a^{\rho'}=K_{51}a^{\rho'}
\end{align*}
Further note that for part $I\!I$ we have $\alpha_{1}>1$, hence $1-\frac{1}{\alpha_{1}}>0$ and
\begin{align*}
I\!I &\leq \sum_{i=i_{1}+2}^{\infty}c^{-i}(c-1)K_62c^{i/\alpha_1-i_{0}}
	= Kc^{-i_{0}}\sum_{i=i_{1}+2}^{\infty}\left(c^{-i}\right)^{1-\frac{1}{\alpha_{1}}}(c-1)\\
	&= Kc^{-i_{0}}\left(c^{-(i_{1}+2)}\right)^{1-\frac{1}{\alpha_{1}}}
	\leq Kc^{-i_{0}}\left(c^{-i_{0}\alpha_{2}'}\right)^{1-\frac{1}{\alpha_{1}}}\\
	&= K\left(c^{-i_{0}}\right)^{1+\alpha_{2}'(1-\frac{1}{\alpha_{1}})} = Kc^{-i_{0}\rho'}\leq K_{52}a^{\rho'}
\end{align*}
Putting things together, we get the upper bound $\Exp[T(a,s)] \leq K_{51}a^{\rho'}+ K_{52}a^{\rho'} = K_{5}a^{\rho'}$ in part (ii) for all $0\leq s\leq 1$. To prove the lower bound, we choose $i_0,i_1$ as in the proof of the lower bound in part (i), i.e. $c^{-i_0}<a\leq c^{-i_0+1}$ and $c^{-i_1}<s\leq c^{-i_1+1}$. Note that, since $d_1=1$, for $j=1$ in \eqref{ubXj} we can choose $K=1$ and $\alpha_1''=\alpha_1$. Hence, similar to the above, we get
\begin{equation}\label{stlb}
\Exp[T(a,s)]\geq\sum_{i=i_{1}-1}^{\infty}c^{-i}\int_{1}^{c}P\left(\begin{array}{c} |X^{(1)}(m)|<\frac{c^{i/\alpha_1-i_{0}}}{\sqrt{p}}\text{ and}\\ \|X^{(j)}(m)\|\leq K^{-1}\frac{c^{i/\alpha_j''-i_{0}}}{\sqrt{p}}, 2\leq j\leq p\end{array}\right)dm.
\end{equation}
By Lemma \ref{positive} choose $K_{10}>0$, $r>0$ and uniformly bounded Borel sets $J_m\subseteq\rr^{d-1}$ with Lebesgue measure $0<K_9\leq\lambda^{d-1}(J_m)<\infty$ for every $m\in[1,c)$ such that the bounded continuous density $g_m(x_1,\ldots,x_p)$ of $X(m)=X^{(1)}(m)+\cdots+X^{(p)}(m)$ fulfills
$$g_m(x_1,\ldots,x_p)\geq K_{10}>0\quad\text{ for all }(x_1,\ldots,x_p)\in[-r,r]\times J_{m}$$
and for every $m\in[1,c)$. Since $\{J_m\}_{m\in[1,c)}$ is uniformly bounded by Lemma \ref{positive}, we are able to choose $0<\delta\leq c^{-1}<1$ such that
$$\bigcup_{m\in[1,c)}J_{m}\subseteq\left\{\|x_j\|\leq\frac{K^{-1}c^{-\alpha_1/\alpha_p}}{\delta\sqrt{p}},\;2\leq j\leq p\right\},$$
where $K$ is the constant from \eqref{stlb}. Let $\eta=c^{2/\alpha_p}/(r\sqrt{p})$. Since $\alpha_{1}>\alpha_{2}''$, there exists a constant $a_{0}\in(0,1]$ such that $\left(\eta a\right)^{\alpha_{1}}<\left(\delta a\right)^{\alpha_{2}''}$ for all $0<a\leq a_{0}$. Now chose $i_{2}, i_{3}\in \mathbb{N}_{0}$ such that $c^{-i_{2}} <\left(\delta c^{-i_{0}+1}\right)^{\alpha_{2}''}\leq c^{-i_{2}+1}$ and $c^{-i_{3}} <\left(\eta c^{-i_{0}}\right)^{\alpha_{1}}\leq c^{-i_{3}+1}$. Note that
$$c^{-i_3}<\left(\eta c^{-i_{0}}\right)^{\alpha_{1}}<\left(\eta a\right)^{\alpha_{1}}<\left(\delta a\right)^{\alpha_{2}''}\leq\left(\delta c^{-i_0+1}\right)^{\alpha_{2}''}\leq c^{-i_2+1}$$
and
$$c^{-(i_1-1)}\geq s\geq a^{\alpha_2}\geq a^{\alpha_2''}>\left(c^{-i_0}\right)^{\alpha_2''}\geq\left(\delta c^{-i_0+1}\right)^{\alpha_2''}>c^{-i_2},$$
hence $i_3\geq i_2-1$ and $i_1-1\leq i_2$. We further have for all $i=i_2,\ldots,i_3+1$ and every $j=2,\ldots,p$
\begin{equation}\label{up}
\frac{c^{i/\alpha_1-i_{0}}}{\sqrt{p}}\leq \frac{c^{(i_3+1)/\alpha_1-i_{0}}}{\sqrt{p}}\leq\frac{c^{2/\alpha_1}(\eta c^{-i_{0}})^{-1}c^{-i_{0}}}{\sqrt{p}}=\frac{c^{2/\alpha_1}}{\eta\sqrt{p}}=r
\end{equation}
and
\begin{equation}\begin{split}\label{down}
\frac{c^{i/\alpha_j''-i_{0}}}{\sqrt{p}} & \geq \frac{c^{i_2/\alpha_j''-i_{0}}}{\sqrt{p}}\geq \frac{(\delta c^{-i_0+1})^{-\alpha_2''/\alpha_j''}c^{-i_0}}{\sqrt{p}}\\
& =\frac{(\delta^{-1}c^{i_0-1})^{\alpha_2''/\alpha_j''}c^{-i_0}}{\sqrt{p}}\geq\frac{c^{-\alpha_2''/\alpha_j''}}{\delta\sqrt{p}}\geq\frac{c^{-\alpha_1/\alpha_p}}{\delta\sqrt{p}}.
\end{split}\end{equation}
Let $I_m=(-\frac{c^{i/\alpha_1-i_{0}}}{\sqrt{p}},\frac{c^{i/\alpha_1-i_{0}}}{\sqrt{p}})\times J_m$ then in view of \eqref{stlb}, we get using \eqref{up} and \eqref{down}
\begin{align*}
\Exp[T(a,s)] & \geq \sum_{i=i_{2}}^{i_3+1}c^{-i}\int_{1}^{c}P\left(\begin{array}{c} |X^{(1)}(m)|<\frac{c^{i/\alpha_1-i_{0}}}{\sqrt{p}}\text{ and}\\ \|X^{(j)}(m)\|\leq K^{-1}\frac{c^{i/\alpha_j''-i_{0}}}{\sqrt{p}}, 2\leq j\leq p\end{array}\right)dm\\
	&\geq \sum_{i=i_{2}}^{i_{3}+1}c^{-i}\int_{1}^{c}\int_{I_m} g_{m}(x)\,dx\,dm
	\geq \sum_{i=i_{2}}^{i_{3}+1}c^{-i}(c-1)\,2\,\frac{c^{i/\alpha_1-i_{0}}}{\sqrt{p}}\,K_9\,K_{10}\\
	&= Kc^{-i_{0}}\sum_{i=i_{2}}^{i_{3}+1}\left(c^{-i}\right)^{1-\frac{1}{\alpha_{1}}}
	= Kc^{-i_{0}}\left(\frac{1-\left(c^{-(i_{3}+2)}\right)^{1-\frac{1}{\alpha_{1}}}}{1-c^{\frac{1}{\alpha_{1}}-1}}- \frac{1-\left(c^{-i_{2}}\right)^{1-\frac{1}{\alpha_{1}}}}{1-c^{\frac{1}{\alpha_{1}}-1}}\right)\\
	& = Kc^{-i_{0}}\left(\left(c^{-i_{2}}\right)^{1-\frac{1}{\alpha_{1}}}-\left(c^{-(i_{3}+2)}\right)^{1-\frac{1}{\alpha_{1}}}\right)\\	
	& \geq K_{41}\left(c^{-i_{0}}\right)^{\rho''} - K_{42}\left(c^{-i_{0}}\right)^{\alpha_{1}}.
\end{align*}
Since $\rho''=1+\alpha_{2}''(1-\frac{1}{\alpha_{1}}) < 1+\alpha_{1}(1-\frac{1}{\alpha_{1}})=\alpha_{1}$ we have
$\left(c^{-i_{0}}\right)^{\alpha_{1}-\rho''}\to0$ if $a\to0$, i.e. $i_{0}\to\infty$. Hence we can further choose $a_{0}$ sufficiently small, such that
$$\Exp[T(a,s)]\geq\frac{K_{41}}{2}\,\left(c^{-i_{0}}\right)^{\rho''}\geq K_{4}a^{\rho''}$$
for all $0<a\leq a_{0}$, which proves the lower bound in part (ii) and concludes the proof.
\end{proof}

\begin{remark}\label{rem1dim}
In fact we have proven a little bit more than stated in Theorem \ref{sojournbounds}. Part (i) is also valid in case $d=1$ for a $(c^{1/\alpha},c)$-semistable L\'evy process in $\rr$ with $\alpha_1=\alpha$ and $d_1=1$. Our proof also shows that the upper bounds in part (i) and (ii) are valid for all $0\leq s\leq 1$, but this is also a direct consequence from the definition of a sojourn time.
\end{remark}	

\section{Main Results}

Recall the spectral decomposition of Section 2.2 for the constants $\alpha_1,\alpha_2$ and $d_1$ appearing in the following results.

\begin{theorem}\label{main}
Let $X=\{X(t)\}_{t\geq0}$ be an operator semistable L\'evy process in  $\rd$ with $d\geq2$. Then for any Borel set $B\subseteq\rr_+$ we have almost surely 
\begin{align*}
	\dim_{\rm H}X(B)=
	\begin{cases} 
	\alpha_1\dim_{\rm H}B & \text{if } \alpha_1\dim_{\rm H}B \leq d_1, \\
	1+\alpha_2\left(\dim_{\rm H}B-\frac1{\alpha_1}\right) & \text{if } \alpha_1\dim_{\rm H}B > d_1.
	\end{cases}
\end{align*}
\end{theorem}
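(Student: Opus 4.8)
The plan is to prove matching upper and lower bounds for $\dim_{\rm H}X(B)$, treating the two cases of the formula separately but exploiting the sojourn-time estimates of Theorem \ref{sojournbounds} as the engine for both. For the upper bound I would use a covering argument: fix $s>\dim_{\rm H}B$ and take an economical $\varepsilon$-cover of $B$ by intervals; on each time-interval the increment of $X$ stays in a small random neighborhood, so I can cover $X(B)$ by the cubes of a $K_1$-nested family $\Lambda(a)$ hit by the process. The number of such cubes is controlled in expectation by Lemma \ref{PT}, which converts the counting bound into a reciprocal of the expected sojourn time $\Exp[T(a/3,s)]$; feeding in the upper bound $\Exp[T(a,s)]\geq K_2 a^{\alpha_1''}$ (respectively $K_4 a^{\rho''}$) from Theorem \ref{sojournbounds} yields a bound on the expected number of covering cubes, hence on $\Exp[\mathcal H^\gamma(X(B))]$ for the appropriate exponent $\gamma$. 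Letting the auxiliary parameters $\alpha_1''\downarrow\alpha_1$ etc.\ and using a Borel--Cantelli / first-moment argument gives the almost-sure upper bound matching the stated formula.

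For the lower bound I would use the capacity/energy method set up in Section 2.1: since $\dim_{\rm H}=\dim_{\rm C}$ for Borel sets, it suffices to exhibit, for every $\gamma$ strictly below the target dimension, a probability measure on $X(B)$ of finite $\gamma$-energy. The natural choice is the image measure $\mu=P_X^{}\circ(X|_B)$, i.e.\ push forward a Frostman measure $\nu$ on $B$ (with $I_\beta(\nu)<\infty$ for $\beta$ slightly below $\dim_{\rm H}B$) through the random map $t\mapsto X(t)$. Then $\Exp[I_\gamma(\mu)]=\int_B\int_B \Exp[\|X(t)-X(s)\|^{-\gamma}]\,\nu(ds)\,\nu(dt)$, and by stationarity of increments the inner expectation is $\Exp[\|X(|t-s|)\|^{-\gamma}]$. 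Here the spectral decomposition of Section 2.2 together with the scaling relation and Lemma \ref{specbound} lets me bound $\Exp[\|X(u)\|^{-\gamma}]$ by a power of $u$; the negative-moment bound of Lemma \ref{negmom} (which is uniform over $[1,c)$) guarantees these quantities are finite and, after the semistable rescaling $u=c^{-i}m$, controls the $i$-dependence. Choosing $\gamma$ just below $\alpha_1\dim_{\rm H}B$ in the first regime, or below $1+\alpha_2(\dim_{\rm H}B-\frac1{\alpha_1})$ in the second, I would show $\int_B\int_B |t-s|^{-\beta'}\,\nu(ds)\,\nu(dt)<\infty$ reduces the double integral to the finiteness of the Frostman energy $I_{\beta'}(\nu)$, giving $\Exp[I_\gamma(\mu)]<\infty$ and hence $I_\gamma(\mu)<\infty$ almost surely, so $\dim_{\rm H}X(B)\geq\gamma$.

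The main obstacle, and the place where the semistable (discrete-scaling) setting genuinely complicates the stable case of \cite{MX}, is the estimation of the negative moment $\Exp[\|X(u)\|^{-\gamma}]$ as a clean power of $u$ for \emph{all} small $u$, not merely along the geometric sequence $u=c^{-i}$. In the operator stable case selfsimilarity gives an exact scaling $X(u)\stackrel{\rm d}{=}u^E X(1)$, so the negative moment is exactly a power of $u$; in the semistable case I only have $X(c^{-i}m)\stackrel{\rm d}{=}c^{-iE}X(m)$ with $m\in[1,c)$, so I must split $u=c^{-i}m$, apply the operator-norm bounds of Lemma \ref{specbound} to $\|c^{-iE_j}\|$ in each spectral block, and use the \emph{uniform-in-$m$} negative-moment bound from Lemma \ref{negmom} to absorb the leftover factor $\Exp[\|X(m)\|^{-\gamma}]$. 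Getting the exponent in the second regime exactly right will require care: the dominant contribution to $\|X(u)\|^{-\gamma}$ comes from the slowest-decaying spectral component, and one must verify that the interplay between the $X^{(1)}$-direction (dimension $d_1=1$, index $\alpha_1$) and the $X^{(2)}$-direction (index $\alpha_2$) produces precisely the exponent $1+\alpha_2(\dim_{\rm H}B-\frac1{\alpha_1})$ rather than a strictly larger or smaller power, and that the error terms arising from the $\varepsilon$-slack in Lemma \ref{specbound} can be sent to zero by letting $\alpha_1'',\alpha_2''\to\alpha_1,\alpha_2$ after the almost-sure statements are established.
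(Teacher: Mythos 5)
Your overall architecture coincides with the paper's: for the upper bound, cover $B$ by intervals $I_i$, cover each $X(I_i)$ by cubes of a $K_1$-nested family of side $|I_i|^{1/\alpha_1}$ (first regime) or $|I_i|^{1/\alpha_2}$ (second regime), control the expected number of cubes via Lemma \ref{PT} and the lower sojourn bounds of Theorem \ref{sojournbounds}, and conclude by a first-moment bound on $\mathcal H^{\gamma}(X(B))$; for the lower bound, push a Frostman measure $\nu$ on $B$ forward through $X$, write $\Exp[I_\gamma(\mu)]=\int_B\int_B\Exp[\|X(s)-X(t)\|^{-\gamma}]\,\nu(ds)\,\nu(dt)$, split $|s-t|=mc^{\pm i}$ with $m\in[1,c)$, and use Lemma \ref{specbound} together with the uniform negative-moment bound of Lemma \ref{negmom}. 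In the regime $\alpha_1\dim_{\rm H}B\leq d_1$ this is exactly the paper's proof and your plan is complete (modulo the slip of calling the lower sojourn bound an upper bound).

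The genuine gap is in the lower bound for the regime $\alpha_1\dim_{\rm H}B>d_1=1$. There one must take $\gamma\in(1,\,1+\alpha_2(\dim_{\rm H}B-\tfrac1{\alpha_1}))$, so $\gamma>d_1=1$ and Lemma \ref{negmom} applied to the one-dimensional component is unavailable: $\Exp[|X^{(1)}(m)|^{-\gamma}]=\infty$. Applying it instead to the pair $(X^{(1)}(m),X^{(2)}(m))$ and pulling out the operator norm $\|c^{i(E_1\oplus E_2)}\|\asymp c^{i/\alpha_2}$ — which is what "absorbing the leftover factor by Lemma \ref{negmom}" amounts to — only yields $\Exp[\|X(s)-X(t)\|^{-\gamma}]\leq K|s-t|^{-\gamma/\alpha_2'}$, hence only $\dim_{\rm H}X(B)\geq\alpha_2\dim_{\rm H}B$, which is strictly below the target since $\alpha_2<\alpha_1$. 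You correctly identify the interplay of the two spectral blocks as the crux, but you do not supply the mechanism that produces the exponent $\rho'=\tfrac{\gamma}{\alpha_{2}'}-(\tfrac{1}{\alpha_{2}'}-\tfrac{1}{\alpha_{1}})$. The paper's mechanism is an anisotropic estimate on the joint law: one bounds $\Exp[\|X(s)-X(t)\|^{-\gamma}]$ by $K|s-t|^{-\gamma/\alpha_1}\int(|x_1|^\gamma+z^\gamma\|x_2\|^\gamma)^{-1}g_m(x_1,x_2)\,dx_1\,dx_2$ with $z=|s-t|^{1/\alpha_2'-1/\alpha_1}$, and then shows by Fubini and integration by parts in spherical coordinates, using the uniform density bound \eqref{K8}, that this integral is $O(z^{-(\gamma-1)})$. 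The exponent $\gamma-1$ rather than $\gamma$ — precisely what converts $\alpha_2\dim_{\rm H}B$ into $1+\alpha_2(\dim_{\rm H}B-\tfrac1{\alpha_1})$ — comes from $d_1=1$: integrating $|x_1|^{-\gamma}$ near the origin against the bounded marginal density of $X^{(1)}(m)$ costs exactly one power. Without this (or an equivalent) genuinely two-dimensional estimate, the lower bound in the second regime does not close.
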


As a direct consequence, for $B=[0,1]$ with $\dim_{\rm H}B=1$ the Hausdorff dimension of the range of $X$ is determined as follows.

\begin{cor}
Let $X=\{X(t)\}_{t\geq0}$ be an operator semistable L\'evy process in  $\rd$ with $d\geq2$. Then we have almost surely 
\begin{align*}
	\dim_{\rm H}X([0,1])=
	\begin{cases} 
	\alpha_1 & \text{if } \alpha_1\leq d_1, \\
	1+\alpha_2\left(1-\frac1{\alpha_1}\right) & \text{otherwise.}
	\end{cases}
\end{align*}
\end{cor}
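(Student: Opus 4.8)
Since the Corollary is the special case $B=[0,1]$ of Theorem \ref{main}, where $\dim_{\rm H}B=1$, it suffices to prove Theorem \ref{main}. Writing $\gamma=\dim_{\rm H}B$, the plan is to establish matching upper and lower bounds for $\dim_{\rm H}X(B)$: an upper bound by a Pruitt--Taylor covering argument built on the sojourn estimates of Theorem \ref{sojournbounds}, and a lower bound by a capacity argument built on sharp negative moments of the increments.

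For the upper bound we would fix $\gamma'>\gamma$ and use that $\mathcal H^{\gamma'}(B)=0$ to cover $B$ by time intervals $\{I_j\}$ of lengths $s_j$ with $\sum_j s_j^{\gamma'}$ arbitrarily small and mesh arbitrarily fine. Over each $I_j$ we cover the partial range $X(I_j)$ by cubes of a fixed $3^d$-nested family $\Lambda(a_j)$, linking the spatial side $a_j$ to the temporal length $s_j$ by the natural sojourn scale: $a_j=s_j^{1/\alpha_1}$ when $\alpha_1\gamma\le d_1$, and $a_j=s_j^{1/\alpha_2}$ when $\alpha_1\gamma>d_1$. Lemma \ref{PT} bounds the expected number $M_j$ of cubes hit during $I_j$ by $2K_1s_j/\Exp[T(a_j/3,s_j)]$, and the lower sojourn bounds of Theorem \ref{sojournbounds} turn the right-hand side into a power of $s_j$ for which the expected $\sigma$-Hausdorff sum $\sum_j\Exp[M_j]\,a_j^{\sigma}$ is comparable to $\sum_j s_j^{\gamma'}$ exactly when $\sigma$ equals the claimed dimension. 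Shrinking the cover then yields $\Exp[\mathcal H^{\sigma'}(X(B))]=0$ for every $\sigma'$ slightly larger than the claimed value, hence $\dim_{\rm H}X(B)\le\sigma$ almost surely; a final passage to the limit in the auxiliary exponents $\alpha_j',\alpha_j''$ removes the $\varepsilon$-loss from Lemma \ref{specbound}.

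For the lower bound we would choose $\beta<\gamma$ and, by Frostman's lemma, a measure $\nu\in\mathcal M^1(B)$ with $I_\beta(\nu)<\infty$, push it forward to $\mu=\nu\circ X^{-1}\in\mathcal M^1(X(B))$, and estimate, using stationary increments,
$$\Exp[I_s(\mu)]=\int_B\int_B\Exp\big[\|X(|u-v|)\|^{-s}\big]\,\nu(du)\,\nu(dv).$$
Everything reduces to a sharp bound on the negative moment $\Exp[\|X(t)\|^{-s}]$ for small $t$. Writing $t=mc^{-i}$ with $m\in[1,c)$ and decomposing $\|X(t)\|^2$ along the spectral subspaces, we would bound $\|X(t)\|$ below by its leading spectral components and integrate against the uniformly bounded densities $g_m$ of Lemma \ref{densbound}. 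When $\alpha_1\gamma\le d_1$, keeping only $X^{(1)}$ and invoking Lemma \ref{negmom} for $X^{(1)}$ (valid since $s<d_1$) gives $\Exp[\|X(t)\|^{-s}]\le K\,t^{-s/\alpha_1}$, whence $\Exp[I_s(\mu)]\le K\,I_{s/\alpha_1}(\nu)<\infty$ for $s<\alpha_1\beta$, and letting $\beta\uparrow\gamma$ yields $\dim_{\rm H}X(B)\ge\alpha_1\gamma$. When $\alpha_1\gamma>d_1=1$ one must retain both $X^{(1)}$ and $X^{(2)}$; a layer-cake computation splitting the radial integral at the crossover scale where the two rescaled components balance produces $\Exp[\|X(t)\|^{-s}]\le K\,t^{-(1/\alpha_1+(s-1)/\alpha_2)}$ for $1<s<1+d_2$, whence $\Exp[I_s(\mu)]\le K\,I_{1/\alpha_1+(s-1)/\alpha_2}(\nu)<\infty$ for $s<1+\alpha_2(\beta-\tfrac1{\alpha_1})$, and letting $\beta\uparrow\gamma$ yields the second branch.

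The main obstacle will be this last negative-moment estimate in the regime $\alpha_1\gamma>d_1$. In contrast to the operator stable case, semistability provides the scaling \eqref{soss} only along the discrete lattice, so the exact identity $X(t)\stackrel{\rm d}{=}t^EX(1)$ is unavailable and the factor $m\in[1,c)$ must be carried through every step and absorbed by the \emph{uniform} density bounds of Lemma \ref{densbound} rather than by selfsimilarity. In addition, Lemma \ref{specbound} pins down the norms $\|t^{E_j}\|$ only up to $t^{\pm\varepsilon}$, which forces the sharp exponents $a_1,a_2$ to be replaced by the perturbed $a_j',a_j''$ and recovered in a limit. Coordinating these two discrete-scale effects simultaneously in the covering bound and in the two-component radial integral is the delicate point; the positivity set of Lemma \ref{positive} enters only indirectly, since it feeds the lower sojourn bound underlying the upper dimension estimate and plays no role in the capacity argument.
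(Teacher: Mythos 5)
Your proposal is correct and follows essentially the same route as the paper: the corollary is obtained by specializing Theorem \ref{main} to $B=[0,1]$, the upper bound comes from the Pruitt--Taylor covering lemma combined with the lower sojourn-time estimates of Theorem \ref{sojournbounds} (with the side lengths $a_j=s_j^{1/\alpha_1}$, respectively $s_j^{1/\alpha_2}$, exactly as in the paper), and the lower bound comes from a Frostman/capacity argument reducing to the negative-moment estimates $\Exp[\|X(t)\|^{-s}]\le K\,t^{-s/\alpha_1}$ and $\Exp[\|X(t)\|^{-s}]\le K\,t^{-(1/\alpha_1+(s-1)/\alpha_2)}$, which the paper derives by the same two-component radial integration you describe. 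Your remarks on carrying the factor $m\in[1,c)$ through via the uniform density bounds and on recovering the sharp exponents from the perturbed ones correctly identify where the semistable case departs from the operator stable one.
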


The lower cases in the above dimension formulas are only meaningful if $d\geq2$. For a one-dimensional semistable L\'evy process the Hausdorff dimension is determined as follows. 

\begin{theorem}\label{dim1}
Let $X=\{X(t)\}_{t\geq0}$ be a $(c^{1/\alpha},c)$-semistable L\'evy process in  $\rr$. Then for any Borel set $B\subseteq\rr_+$ we have almost surely 
$$\dim_{\rm H}X(B)=\min(\alpha\dim_{\rm H}B,1).$$
In particular, for $B=[0,1]$ we obtain for the range $\dim_{\rm H}X([0,1])=\min(\alpha,1)$ a.s.
\end{theorem}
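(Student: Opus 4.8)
The plan is to establish the two matching bounds $\dim_{\rm H}X(B)\le\min(\alpha\dim_{\rm H}B,1)$ and $\dim_{\rm H}X(B)\ge\min(\alpha\dim_{\rm H}B,1)$ almost surely. Since Hausdorff dimension is countably stable and $X(B)=\bigcup_n X(B\cap[0,n])$, I would first reduce to a bounded Borel set $B\subseteq[0,N]$. Throughout I use that in dimension one the exponent is $E=\alpha^{-1}I$, so semi-selfsimilarity reads $X(c^{-i}m)\stackrel{\rm d}{=}c^{-i/\alpha}X(m)$ for $m\in[1,c)$ and $i\in\nat_0$.

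For the lower bound I would use the capacity method from Section 2.1. Fix $s<\min(\alpha\dim_{\rm H}B,1)$ and choose $\beta\in(s/\alpha,\dim_{\rm H}B)$; note $s<1$. By Frostman's lemma there is $\sigma\in\mathcal M^1(B)$ with $I_\beta(\sigma)<\infty$, which I push forward to $\mu=\sigma\circ X^{-1}\in\mathcal M^1(X(B))$. By Tonelli,
$$\Exp[I_s(\mu)]=\int_B\int_B\Exp\big[|X(u)-X(v)|^{-s}\big]\,\sigma(du)\,\sigma(dv),$$
and stationarity of increments gives $\Exp[|X(u)-X(v)|^{-s}]=\Exp[|X(|u-v|)|^{-s}]$. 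The key ingredient is a uniform negative-moment bound: writing $t=c^{-i}m$ and combining the scaling above with Lemma~\ref{negmom} (which requires $s<d=1$) yields $\Exp[|X(t)|^{-s}]\le K\,t^{-s/\alpha}$ for $0<t\le N$. Hence $\Exp[I_s(\mu)]\le K\,I_{s/\alpha}(\sigma)<\infty$, since $s/\alpha<\beta$ and $\sigma$ has bounded support. Thus $I_s(\mu)<\infty$ almost surely, so $\dim_{\rm H}X(B)=\dim_{\rm C}X(B)\ge s$ almost surely, and letting $s$ run through a sequence increasing to $\min(\alpha\dim_{\rm H}B,1)$ finishes the lower bound. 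The cap at $1$ is exactly the constraint $s<d=1$ forced by Lemma~\ref{negmom}.

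For the upper bound the estimate $\dim_{\rm H}X(B)\le1$ is automatic as $X(B)\subseteq\rr$, so it remains to prove $\dim_{\rm H}X(B)\le\alpha\dim_{\rm H}B$ (informative only when $\alpha\dim_{\rm H}B<1$). Fix $\gamma>\dim_{\rm H}B$ and take a cover of $B$ by small intervals $I_k=[u_k,u_k+s_k]$ with $\sum_k s_k^\gamma$ small. Covering each image $X(I_k)$ by cubes of side $a_k=s_k^{1/\alpha}$ from the nested family $\Lambda(a_k)$, Lemma~\ref{PT} bounds the expected number of such cubes by $2K_1 s_k/\Exp[T(a_k/3,s_k)]$. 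I then insert a sojourn lower bound: for $\alpha\le1$ Theorem~\ref{sojournbounds}(i), valid in dimension one by Remark~\ref{rem1dim}, gives $\Exp[T(a,s)]\ge K\,a^{\alpha''}$, while for $\alpha>1$ I would prove the separate bound $\Exp[T(a,s)]\ge K\,a\,s^{1-1/\alpha}$. In both cases the expected Hausdorff sum $\sum_k\Exp[M_{u_k}(a_k,s_k)]\,a_k^\eta$ is dominated by a constant times $\sum_k s_k^{\gamma}$ once $\eta$ sits slightly above $\alpha\gamma$; a Markov–Fatou argument then gives $\mathcal H^\eta(X(B))<\infty$ almost surely, and letting $\eta\downarrow\alpha\gamma$ and $\gamma\downarrow\dim_{\rm H}B$ through countable sequences yields $\dim_{\rm H}X(B)\le\alpha\dim_{\rm H}B$.

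The main obstacle is the sojourn lower bound in the regime $\alpha>1$, which lies outside Theorem~\ref{sojournbounds}: here $d_1=1<\alpha_1$ but $p=1$, so there is no second spectral component $X^{(2)}$ to exploit as in part (ii). To obtain $\Exp[T(a,s)]\ge K\,a\,s^{1-1/\alpha}$ I would use that, for $\alpha>1$, the continuous density $g_t$ is of type A with $g_t(0)>0$ (the strong-law argument preceding Lemma~\ref{positive}); together with the continuity and uniformity of Lemma~\ref{densbound} this gives $g_m(y)\ge\delta$ for $|y|\le r$ uniformly in $m\in[1,c]$. Decomposing $\Exp[T(a,s)]=\int_0^s P(|X(t)|\le a)\,dt$ over the scales $t\in[c^{-i},c^{-i+1})$, the scaling identity turns $P(|X(t)|\le a)$ into $P(|X(m)|\le c^{i/\alpha}a)\ge 2\delta\,c^{i/\alpha}a$ on the range of $i$ with $c^{i/\alpha}a\le r$, and summing the resulting geometric series in $c^{-i(1-1/\alpha)}$ (dominated by the scale $c^{-i}\approx s$) produces the claimed bound. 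Beyond this, the routine technical points are the passage from box-type to variable-length covers so as to reach the Hausdorff rather than the box dimension, and the absorption of the $\varepsilon$-losses intrinsic to semistable scaling through the limits in $\alpha''$, $\eta$ and $\gamma$.
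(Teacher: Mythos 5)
Your proposal is correct and follows the same overall architecture as the paper: the lower bound is the capacity/Frostman argument of Lemma \ref{lowerbound} (first case), with the cap at $1$ coming precisely from the restriction $\gamma<d=1$ in Lemma \ref{negmom}, and the upper bound is the Pruitt--Taylor covering argument of Lemma \ref{upperbound} fed by a sojourn-time lower bound. The one place you genuinely depart from the paper is the regime $\alpha>1$ with $\alpha\dim_{\rm H}B<1$. The paper disposes of this case by asserting that Lemma \ref{upperbound} remains valid for $d=1$ via Remark \ref{rem1dim}; what makes that work is that the proof of the \emph{lower} bound in Theorem \ref{sojournbounds}(i), namely $\Exp[T(a,s)]\ge K_2a^{\alpha_1''}$, never uses the hypothesis $\alpha_1\le d_1$ (it only uses the c\`adl\`ag property and the discrete scaling), so it applies verbatim for $d=1$, $\alpha>1$ and suffices once $\alpha''\downarrow\alpha$. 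You instead prove the sharper bound $\Exp[T(a,s)]\ge K\,a\,s^{1-1/\alpha}$ from positivity of the density at the origin (the type-A argument preceding Lemma \ref{positive} combined with the uniformity of Lemma \ref{densbound}); this is correct, with the small caveat that when no scale $i$ satisfies both $c^{i/\alpha}a\le r$ and $c^{-i}\le s$ you should fall back on $P(|X(m)|\le r)\ge 2\delta r$, which still yields $Ks\ge K\,a\,s^{1-1/\alpha}$ under the standing assumption $s\ge a^{\alpha}$. Your route is more self-contained at that point and gives an exact-order sojourn estimate (of independent interest, e.g.\ for exact Hausdorff measure functions), while the paper's route is shorter but requires reading the proof rather than the statement of Theorem \ref{sojournbounds}. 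Everything else (the reduction to bounded $B$ by countable stability, the uniform negative-moment bound $\Exp[|X(t)|^{-s}]\le Kt^{-s/\alpha}$ obtained from $t=c^{-i}m$ and Lemma \ref{negmom}, and the Markov--Fatou passage to $\mathcal H^{\eta}(X(B))<\infty$) matches the paper's proof.
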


For the proof of Theorem \ref{main} we follow standard techniques of determining upper and lower bounds for $\dim_{\rm H}X(B)$ as described on page 289 of \cite{Xiao}. Similar arguments can be found in Xiao and Lin \cite{XL} for multivariate selfsimilar processes with independent components.

\subsection{Upper bounds}

To obtain upper bounds for $\dim_{\rm H}X(B)$ we choose a suitable sequence of coverings of $X(B)$ and show that its corresponding $\gamma$-dimensional Hausdorff measure has finite expectation, which leads to $\dim_{\rm H}X(B)\leq\gamma$ almost surely. This method goes back to Pruitt and Taylor \cite{PT} and Hendricks \cite{Hen}.
\begin{lemma}\label{upperbound}
Let $X=\{X(t)\}_{t\geq0}$ be an operator semistable L\'evy process in  $\rd$ with $d\geq2$. Then for any Borel set $B \subseteq \rr_+$ we have almost surely 
\begin{align*}
	\dim_{\rm H}X(B)\leq
	\begin{cases} 
	\alpha_1\dim_{\rm H}B & \text{if } \alpha_1\dim_{\rm H}B \leq d_1, \\
	1+\alpha_2\left(\dim_{\rm H}B-\frac1{\alpha_1}\right) & \text{if } \alpha_1\dim_{\rm H}B > d_1.
	\end{cases}
\end{align*}
\end{lemma}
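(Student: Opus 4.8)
The plan is to run the covering method of Pruitt--Taylor and Hendricks, converting the sojourn lower bounds of Theorem \ref{sojournbounds} into an upper bound on a suitable Hausdorff content. By countable stability of Hausdorff dimension together with stationarity of increments (writing $B=\bigcup_{k\in\nat_0}(B\cap[k,k+1))$ and using that $\{X(k+t)-X(k)\}_{t\geq0}$ is an independent copy of $X$, while the target formula is increasing in $\dim_{\rm H}B$), it suffices to treat a bounded set $B\subseteq[0,1]$. I would fix $\beta>\dim_{\rm H}B$; then $\mathcal H^\beta(B)=0$, so for every $\eta>0$ there is a countable cover of $B$ by intervals $I_n=[u_n,u_n+s_n]$ of arbitrarily small mesh $\sup_n s_n$ with $\sum_n s_n^\beta<\eta$. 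To each $I_n$ I attach a spatial scale $a_n$ and cover $X(I_n)$ by those cubes of the $3^d$-nested family $\Lambda(a_n)$ that are hit during $[u_n,u_n+s_n]$; their number is $M_{u_n}(a_n,s_n)$, so these cubes form a cover of $X(B)$ of mesh $\sqrt d\,\sup_n a_n\to0$, and for a target exponent $\gamma$ its $\gamma$-cost is $\sum_n(\sqrt d\,a_n)^\gamma M_{u_n}(a_n,s_n)$. Taking expectations and applying Lemma \ref{PT} gives
\[
\Exp\Big[\sum_n(\sqrt d\,a_n)^\gamma M_{u_n}(a_n,s_n)\Big]\le K\sum_n a_n^\gamma\,\frac{s_n}{\Exp[T(a_n/3,s_n)]},
\]
and I would bound the denominator by the sojourn lower bounds of Theorem \ref{sojournbounds}.

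The choice of $a_n$ depends on the position of $\alpha_1\beta$ relative to $d_1$. If $\alpha_1\beta\le d_1$, I set $a_n=s_n^{1/\alpha_1}$, so that $(a_n/3)^{\alpha_1}\le s_n$ and the lower bound $\Exp[T(a_n/3,s_n)]\ge K a_n^{\alpha_1''}$ applies; then $a_n^\gamma s_n a_n^{-\alpha_1''}=s_n^{\,1+(\gamma-\alpha_1'')/\alpha_1}$, and the choice $\gamma=\alpha_1''+\alpha_1(\beta-1)$ makes every summand equal to $s_n^\beta$, whence the total expected $\gamma$-cost is $\le K\sum_n s_n^\beta<K\eta$. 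If instead $\alpha_1\beta> d_1$---which forces $\alpha_1>d_1$ and hence $d_1=1$, so that part (ii) of Theorem \ref{sojournbounds} is available---I set $a_n=s_n^{1/\alpha_2}$, so that $(a_n/3)^{\alpha_2}\le s_n$ and $\Exp[T(a_n/3,s_n)]\ge K a_n^{\rho''}$ with $\rho''=1+\alpha_2''(1-\tfrac1{\alpha_1})$; now $a_n^\gamma s_n a_n^{-\rho''}=s_n^{\,1+(\gamma-\rho'')/\alpha_2}$, and $\gamma=\rho''+\alpha_2(\beta-1)$ again yields total cost $\le K\sum_n s_n^\beta$. Running the cover along a sequence with $\eta\downarrow0$ and mesh $\downarrow0$ produces costs $C_\ell$ with $\Exp[C_\ell]\to0$ and $\mathcal H^\gamma_{\varepsilon_\ell}(X(B))\le C_\ell$, so that $\mathcal H^\gamma(X(B))\le\liminf_\ell C_\ell$ and, by Fatou's lemma, $\Exp[\liminf_\ell C_\ell]\le\liminf_\ell\Exp[C_\ell]=0$. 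Hence $\mathcal H^\gamma(X(B))=0$ and $\dim_{\rm H}X(B)\le\gamma$ almost surely. Finally I let $\alpha_1''\downarrow\alpha_1$, $\alpha_2''\downarrow\alpha_2$ and $\beta\downarrow\dim_{\rm H}B$ through rationals and intersect the corresponding almost sure events, which sends $\gamma$ to $\alpha_1\dim_{\rm H}B$ in the first case and to $1+\alpha_2(\dim_{\rm H}B-\tfrac1{\alpha_1})$ in the second.

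The main obstacle is that the two case distinctions do not line up: Theorem \ref{sojournbounds} is split according to $\alpha_1$ versus $d_1$, whereas the dimension formula is split according to $\alpha_1\dim_{\rm H}B$ versus $d_1$. The genuinely delicate regime is $\alpha_1>d_1=1$ together with $\alpha_1\beta\le1$, where I still want the bound $\alpha_1\beta$ and hence the estimate $\Exp[T(a,s)]\ge K a^{\alpha_1''}$ with exponent close to $\alpha_1$---and this is \emph{not} the estimate $\rho''$ produced by part (ii) at its natural scale $a_n=s_n^{1/\alpha_2}$, which would give the strictly larger value $1+\alpha_2(\beta-\tfrac1{\alpha_1})$. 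I expect to resolve this by observing that the lower bound in the proof of Theorem \ref{sojournbounds}(i) uses only the contraction estimate \eqref{ubXj} and the positivity of $P(\sup_{m\in[1,c)}\|X^{(j)}(m)\|\le\text{const})$, neither of which requires $\alpha_1\le d_1$; thus $\Exp[T(a,s)]\ge K a^{\alpha_1''}$ for $a^{\alpha_1}\le s\le1$ is in fact available for every operator semistable $X$, which is exactly what the case $\alpha_1\beta\le d_1$ needs. The remaining points are routine but require care: verifying that the chosen scales meet the hypotheses $(a/3)^{\alpha}\le s\le1$ and $a\le a_0$ of the sojourn bounds (guaranteed once the mesh is small), and making the passage from the expectation estimate on a sequence of covers to the almost sure vanishing of $\mathcal H^\gamma(X(B))$ precise via monotonicity of $\mathcal H^\gamma_\varepsilon$ in $\varepsilon$ and Fatou's lemma.
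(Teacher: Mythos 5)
Your proposal is correct and follows essentially the same route as the paper: the Pruitt--Taylor covering lemma combined with the sojourn-time lower bounds of Theorem \ref{sojournbounds}, with the same scale choices $a_n=s_n^{1/\alpha_1}$ (resp.\ $a_n=s_n^{1/\alpha_2}$) and the same exponent bookkeeping. The only divergence is in the regime $\alpha_1>d_1=1$ with $\alpha_1\dim_{\rm H}B\leq 1$, where you re-derive the bound $\Exp[T(a,s)]\geq K\,a^{\alpha_1''}$ from the proof of Theorem \ref{sojournbounds}(i), while the paper obtains it from the statement of part (ii) via $\rho''\leq\alpha_1<\alpha_1''$, so that $a^{\rho''}\geq a^{\alpha_1''}$ for $0<a\leq1$; both observations are valid.
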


\begin{proof}
(i) Assume $\alpha_1\dim_{\rm H}B \leq d_1$ and $\alpha_{1}\leq d_{1}$. For $\gamma>\dim_{\rm H}B$ choose $\alpha_{1}''>\alpha_{1}$ such that $\gamma'=1-\frac{\alpha_{1}''}{\alpha_{1}}+\gamma>\dim_{\rm H}B$. Then, by definition of the Hausdorff dimension, for any $\varepsilon\in(0,1]$ there exists a sequence $\{I_{i}\}_{i\in\mathbb{N}}$ of intervals in $\rr_{+}$ of length $|I_{i}|<\varepsilon$ such that
$$B\subseteq \bigcup_{i=1}^{\infty}I_{i}\quad\text{ and }\quad\sum_{i=1}^{\infty}|I_{i}|^{\gamma'}<1.$$
Let $s_{i}=|I_{i}|$ und $b_{i}:=|I_{i}|^{\frac{1}{\alpha_{1}}}$ then $(b_i/3)^{\alpha_1}<s_i$. By Lemma \ref{PT} and Theorem \ref{sojournbounds} it follows that $X(I_{i})$ can be covered by $M_{i}$ cubes $C_{ij}\in \Lambda(b_{i})$ of side $b_{i}$ such that for every $i\in\nat$ we have
$$\Exp[M_{i}]\leq 2K_{1}s_{i}\left(\Exp\left(T\left(\tfrac{b_{i}}{3},s_{i}\right)\right]\right)^{-1}\leq 2K_{1}s_{i}K_{2}^{-1}\left(\tfrac{b_{i}}{3}\right)^{-\alpha_{1}''}= K \,s_{i}b_{i}^{-\alpha_{1}''} = K \,|I_{i}|^{1-\frac{\alpha_{1}''}{\alpha_{1}}}.$$
Note that $X(B)\subseteq\bigcup_{i=1}^{\infty} \bigcup_{j=1}^{M_{i}}C_{ij}$, where $b_{i}\sqrt{d}$ is the diameter of $C_{ij}$. Hence $\{C_{ij}\}$ is a $(\varepsilon^{1/\alpha_{1}}\sqrt{d})$-covering of $X(B)$. By monotone convergence we have
\begin{align*}
\Exp\left[\sum_{i=1}^{\infty}M_{i}b_{i}^{\alpha_{1}\gamma}\right] & = \sum_{i=1}^{\infty}\Exp\left[M_{i}b_{i}^{\alpha_{1}\gamma}\right]\leq \sum_{i=1}^{\infty} K \,|I_{i}|^{1-\frac{\alpha_{1}''}{\alpha_{1}}}\, |I_{i}|^{\gamma}= K\sum_{i=1}^{\infty}|I_{i}|^{\gamma'} \leq K.
\end{align*}
Letting $\varepsilon\to0$, i.e $b_{i}\to0$, by Fatou's lemma we get
\begin{align*}
\Exp\left[\mathcal{H}^{\alpha_{1}\gamma}(X(B))\right] & \leq \Exp\left[\liminf_{\varepsilon\to0} \sum_{i=1}^{\infty}\sum_{j=1}^{M_{i}}\left(b_{i}\sqrt{d}\right)^{\alpha_{1}\gamma}\right]\\
& \leq \liminf_{\varepsilon\to 0} \sqrt{d}^{\alpha_{1}\gamma}\Exp\left[\sum_{i=1}^{\infty}M_{i}b_{i}^{\alpha_{1}\gamma}\right]\leq\sqrt{d}^{\alpha_{1}\gamma} K < \infty,
\end{align*}
which shows that $\dim_{\rm H}X(B)\leq \alpha_{1}\gamma$ almost surely. Since $\gamma>\dim_{\rm H}B$ is arbitrary, we get $\dim_{\rm H}X(B) \leq \alpha_{1}\dim_{\rm H}B$ a.s.	

(ii) Assume $\alpha_1\dim_{\rm H}B\leq d_1$ and  $\alpha_{1}>d_{1}$. To be able to argue the same way as in part (i), we have to show that the same lower bound $\Exp[T(a,s)]\geq K\,a^{\alpha_{1}''}$ holds for the expected sojourn time also in case $\alpha_{1}>d_{1}$. In fact, by Theorem \ref{sojournbounds} (ii) we have $\Exp[T(a,s)]\geq K\,a^{\rho''}$, where $\rho''=1+\alpha_{2}''(1-\frac{1}{\alpha_{1}})$ and $0<\alpha_{2}<\alpha_{2}''<\alpha_{1}<\alpha_{1}''$.  Hence
$$\rho''=1+\alpha_{2}''(1-\tfrac{1}{\alpha_{1}})\leq 1+\alpha_{1}(1-\tfrac{1}{\alpha_{1}})=\alpha_{1}<\alpha_{1}''$$
so that for all $0<a\leq 1$ and $a^{\alpha_1}\leq s\leq1$ we get the desired lower bound. Now, as in part (i) the same conclusion 
$\dim_{\rm H}X(B)\leq \alpha_{1}\dim_{\rm H}B$ holds a.s.
	
(iii) Assume $\alpha_1\dim_{\rm H}B>d_1$. Since $\dim_{\rm H}B\leq 1$ it follows that $\alpha_{1}>d_{1}=1$. For $\gamma>\dim_{\rm H}B$ choose $\alpha_{2}''>\alpha_{2}$ such that $\gamma'=1-\frac{\alpha_{2}''}{\alpha_{2}}+\frac{\alpha_{2}''}{\alpha_{2}}\gamma>\dim_{\rm H}B$. For $\varepsilon\in(0,1]$ let $\{I_{i}\}_{i\in\mathbb{N}}$ be the same sequence of intervals as in part (i). 
Let $s_{i}:=|I_{i}|$ und $b_{i}:=|I_{i}|^{\frac{1}{\alpha_{2}}}$ then $(b_i/3)^{\alpha_2}<s_i$. Again, by Lemma \ref{PT} and Theorem \ref{sojournbounds} it follows that $X(I_{i})$ can be covered by $M_{i}$ cubes $C_{ij}\in \Lambda(b_{i})$ of side $b_{i}$ such that for every $i\in\nat$ we have
$$\Exp[M_{i}]\leq 2K_{1}s_{i}\left(\Exp\left(T\left(\tfrac{b_{i}}{3},s_{i}\right)\right]\right)^{-1}\leq 2K_{1}s_{i}K_{4}^{-1}\left(\tfrac{b_{i}}{3}\right)^{-\rho''}= K \,s_{i}b_{i}^{-\rho''} = K \,|I_{i}|^{1-\frac{\rho''}{\alpha_2}},$$
where $\rho''=1+\alpha_{2}''(1-\frac{1}{\alpha_{1}})$. By monotone convergence we have
\begin{align*}
\Exp\left[\sum_{i=1}^{\infty}M_{i}b_{i}^{1+\alpha_{2}''(\gamma-\frac1{\alpha_1})}\right] & \leq \sum_{i=1}^{\infty} K \,|I_{i}|^{1-\frac{\rho''}{\alpha_2}}\, |I_{i}|^{\frac1{\alpha_2}+\frac{\alpha_{2}''}{\alpha_2}(\gamma-\frac1{\alpha_1})}= K\sum_{i=1}^{\infty}|I_{i}|^{\gamma'} \leq K.
\end{align*}
Since $\gamma>\dim_{\rm H}B$ and $\alpha_2''>\alpha_2$ are arbitrary, with the same arguments as in part (i) we get $\dim_{\rm H}X(B) \leq 1+\alpha_2(\dim_{\rm H}B-\frac1{\alpha_1})$ a.s.	
\end{proof}

\subsection{Lower bounds}

In order to show $\dim_{\rm H}X(B)\geq\gamma$ almost surely, we use standard capacity arguments. By Frostman's lemma we choose a suitable probability measure on $B$ with finite energy and show that a corresponding random measure on $X(B)$ has finite expected $\gamma$-energy. The relationship between the Hausdorff and the capacitary dimension by Frostman's theorem then gives the desired lower bound. 
\begin{lemma}\label{lowerbound}
Let $X=\{X(t)\}_{t\geq0}$ be an operator semistable L\'evy process in  $\rd$ with $d\geq2$. Then for any Borel set $B\subseteq\rr_+$ we have almost surely 
\begin{align*}
	\dim_{\rm H}X(B)\geq
	\begin{cases} 
	\alpha_1\dim_{\rm H}B & \text{if } \alpha_1\dim_{\rm H}B \leq d_1, \\
	1+\alpha_2\left(\dim_{\rm H}B-\frac1{\alpha_1}\right) & \text{if } \alpha_1\dim_{\rm H}B > d_1.
	\end{cases}
\end{align*}
\end{lemma}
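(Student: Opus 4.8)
The plan is to run the capacity/energy method sketched before the statement: for every $\gamma$ strictly below the claimed value I produce a random probability measure on $X(B)$ of finite $\gamma$-energy, so that $\dim_{\rm H}X(B)=\dim_{\rm C}X(B)\geq\gamma$ almost surely, and then let $\gamma$ increase to the claimed value. Since Hausdorff dimension is countably stable and the right-hand side is monotone in $\dim_{\rm H}B$, I may assume $B$ is bounded with $\dim_{\rm H}B>0$. Fix $\gamma$ below the target and, by Frostman's lemma as recalled in Section 2.1, choose $\sigma\in\mathcal M^1(B)$ with $I_\beta(\sigma)<\infty$ for a suitable $\beta<\dim_{\rm H}B$ to be matched to $\gamma$ below. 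Let $\mu=\sigma\circ X^{-1}$ be the pushforward, a random element of $\mathcal M^1(X(B))$. By Fubini and the stationarity of increments, $\Exp[I_\gamma(\mu)]=\int_B\int_B\Exp\big[\|X(|t-s|)\|^{-\gamma}\big]\,\sigma(dt)\,\sigma(ds)$, so the whole problem reduces to an upper bound of the form $\Exp[\|X(u)\|^{-\gamma}]\leq K\,u^{-\beta}$ for $0<u\leq1$, the range $u\in[1,\operatorname{diam}B]$ being harmless by Lemma \ref{negmom} and continuity of the negative moment (valid since $\gamma<d$). Granting this, $\Exp[I_\gamma(\mu)]\leq K\,I_\beta(\sigma)<\infty$, whence $I_\gamma(\mu)<\infty$ and $C_\gamma(X(B))>0$ almost surely, giving $\dim_{\rm H}X(B)\geq\gamma$.

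In the regime $\alpha_1\dim_{\rm H}B\leq d_1$ I keep only the slowest spectral block. Writing $u=mc^{-i}$ with $m\in[1,c)$, $i\in\nat_0$, the orthogonal decomposition yields $\|X(u)\|\geq\|X^{(1)}(u)\|\stackrel{\rm d}{=}\|c^{-iE_1}X^{(1)}(m)\|\geq\|X^{(1)}(m)\|/\|c^{iE_1}\|$, and Lemma \ref{specbound} gives $\|c^{iE_1}\|\leq K\,c^{i(a_1+\varepsilon)}$. Hence $\Exp[\|X(u)\|^{-\gamma}]\leq K\,c^{i\gamma(a_1+\varepsilon)}\sup_{m\in[1,c)}\Exp[\|X^{(1)}(m)\|^{-\gamma}]$, where the supremum is finite by Lemma \ref{negmom} applied to the $d_1$-dimensional process $X^{(1)}$, because $\gamma<\alpha_1\dim_{\rm H}B\leq d_1$. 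Since $c^{-i}\asymp u$, this equals $\leq K\,u^{-\gamma(a_1+\varepsilon)}$, and with $\beta=\gamma(a_1+\varepsilon)$ one has $\beta<\dim_{\rm H}B$ once $\varepsilon$ is small, as $\gamma a_1<\dim_{\rm H}B$. Letting $\gamma\uparrow\alpha_1\dim_{\rm H}B$ settles this case.

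The delicate regime is $\alpha_1\dim_{\rm H}B>d_1$, where necessarily $d_1=1$ and $\alpha_1>1$, and the target forces $\gamma\in(1,\gamma^\ast)$ with $\gamma^\ast=1+\alpha_2(\dim_{\rm H}B-\tfrac1{\alpha_1})$. Now a single slow coordinate no longer yields a finite negative moment, so I pass to the small-ball function $\varphi(u,r):=P(\|X(u)\|\leq r)$ and integrate by layers, $\Exp[\|X(u)\|^{-\gamma}]=\gamma\int_0^\infty r^{-\gamma-1}\varphi(u,r)\,dr$. With $s:=c^{-i/\alpha_1}$ and $\tau:=c^{-i(a_2+\varepsilon)}$ (so $s\gg\tau$), I bound $\varphi$ using projections onto the two slowest blocks: for $r\geq s$ trivially $\varphi\leq1$; for $\tau\leq r<s$ the projection to $V_1$ and the uniform one-dimensional density bound of Lemma \ref{densbound} give $\varphi(u,r)\leq P(|X^{(1)}(m)|\leq r\,c^{i/\alpha_1})\leq K\,r\,s^{-1}$; and for $r<\tau$ the projection to $V_1\oplus V_2$, the uniform joint density bound of Lemma \ref{densbound}, and Lemma \ref{specbound} for the volume of $c^{iE_2}B(0,r)$ give $\varphi(u,r)\leq K\,r^{1+d_2}s^{-1}\tau^{-d_2}$. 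Carrying out the three layer integrals (the middle needs $\gamma>1$, the innermost needs $\gamma<1+d_2$, both holding since $1<\gamma<\gamma^\ast<\alpha_1\leq2\leq1+d_2$) I expect each of the two lower ranges to contribute $K\,s^{-1}\tau^{1-\gamma}$ and the top range $K\,s^{-\gamma}$, so that $\Exp[\|X(u)\|^{-\gamma}]\leq K(s^{-\gamma}+s^{-1}\tau^{1-\gamma})$. Because $\gamma>1$ and $s\gg\tau$ the second term dominates, giving the bound with $\beta=\tfrac1{\alpha_1}+(a_2+\varepsilon)(\gamma-1)$; as $\gamma\uparrow\gamma^\ast$ and $\varepsilon\downarrow0$ one checks $\beta\to\dim_{\rm H}B$ from below, so the requirement $\beta<\dim_{\rm H}B$ is met.

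The main obstacle is exactly this case-2 small-ball estimate. The crude bound that discards the fast blocks and keeps only $V_2$ overcounts the region where the slowest coordinate dominates and produces the spurious exponent $\gamma/\alpha_2$, which would yield the wrong, too small value $\alpha_2\dim_{\rm H}B$; the point is that in the bulk $\|X(u)\|$ is governed by $V_1$, so one must split the $r$-axis precisely at the scales $s$ and $\tau$ and use the one-dimensional marginal on the intermediate range. I should also verify that retaining only $V_1$ and $V_1\oplus V_2$, rather than all blocks $V_3,\dots,V_p$, is legitimate: since $\|X(u)\|^2=\sum_j\|X^{(j)}(u)\|^2$, dropping the faster blocks only enlarges the events $\{\|X(u)\|\leq r\}$, so the projection bounds on $\varphi$ remain valid and the faster directions cannot decrease the dimension. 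Assembling both cases, for each admissible $\gamma$ we obtain $\dim_{\rm H}X(B)\geq\gamma$ almost surely, and letting $\gamma$ tend to the claimed value completes the proof.
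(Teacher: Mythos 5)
Your argument is correct and follows the same overall strategy as the paper: the capacity/energy method via Frostman, the reduction to a bound $\Exp[\|X(u)\|^{-\gamma}]\leq K u^{-\beta}$ with $\beta<\dim_{\rm H}B$, the same case split, and in the easy case $\alpha_1\dim_{\rm H}B\leq d_1$ the identical device of discarding all but the slowest spectral block and invoking Lemma \ref{negmom} for $X^{(1)}$ (your exponent $\gamma(a_1+\varepsilon)$ is the paper's $\gamma/\alpha_1'$). Where you genuinely diverge is the execution of the key estimate in the case $\alpha_1\dim_{\rm H}B>d_1=1$: the paper writes the negative moment as an integral of the kernel $\bigl(|x_1|^\gamma+z^\gamma\|x_2\|^\gamma\bigr)^{-1}$, $z=|s-t|^{1/\alpha_2'-1/\alpha_1}$, against the joint density of $(X^{(1)}(m),X^{(2)}(m))$ and then grinds through Fubini, polar coordinates and two integrations by parts to extract the factor $z^{-(\gamma-1)}$; you instead use the layer-cake identity $\Exp[\|X(u)\|^{-\gamma}]=\gamma\int_0^\infty r^{-\gamma-1}P(\|X(u)\|\leq r)\,dr$ and bound the small-ball probability on three explicit radial ranges separated at the crossover scales $c^{-i/\alpha_1}$ and $c^{-i(a_2+\varepsilon)}$, using the uniform marginal and joint density bounds of Lemma \ref{densbound} and the operator-norm bounds of Lemma \ref{specbound}. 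The two computations consume the same hypotheses ($1<\gamma<1+d_2$, uniformity of the densities over $m\in[1,c)$, which is exactly what Lemma \ref{densbound} supplies in the semistable setting) and produce the same exponent $\beta=\tfrac1{\alpha_1}+(a_2+\varepsilon)(\gamma-1)=\rho'$, but your version is more elementary and makes visible why the intermediate range, governed by the one-dimensional $V_1$-marginal alone, is what prevents the spurious exponent $\gamma/\alpha_2$; the paper's version keeps everything inside one double integral at the cost of a more opaque integration-by-parts bookkeeping. Your remarks on why the faster blocks $V_3,\dots,V_p$ may be dropped and why $B$ may be taken bounded are both sound and harmless.
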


\begin{proof}
First assume $0<\alpha_1\dim_{\rm H}B\leq d_1$. In case $\dim_{\rm H}B=0$ there is nothing to prove. For $0<\gamma<\alpha_{1}\dim_{\rm H}B$ choose $0<\alpha_{1}'<\alpha_{1}$ such that $\gamma<\alpha_{1}'\dim_{\rm H}B$. By Frostman's lemma \cite{Kah,Mat} there exists a probability measure $\sigma$ on $B$ such that
\begin{equation}\label{sigma}
	\int_{B}\int_{B}\frac{\sigma(ds)\,\sigma(dt)}{|s-t|^{\gamma/\alpha_{1}'}}<\infty.
\end{equation}
In order to prove $\dim_{\rm H}X(B)\geq\gamma$ almost surely, by Frostman's theorem \cite{Kah,Mat} it suffices to show that
\begin{equation}\label{Xsigma}
	\int_{B}\int_{B}\Exp\left[\|X(s)-X(t)\|^{-\gamma}\right]\,\sigma(ds)\,\sigma(dt)<\infty.
\end{equation}
Let $K_{11}=\sup_{m\in[1,c)} E(\|X^{(1)}(m)\|^{-\gamma})<\infty$ by Lemma \ref{negmom}, since $\gamma<\alpha_{1}\dim_{\rm H}B\leq d_{1}$. In order to verify \eqref{Xsigma} we split the domain of integration into two parts

(i) Assume $|s-t|\leq 1$, then $|s-t|=mc^{-i}$ with $m\in[1,c)$ and $i\in\mathbb{N}_{0}$. By Lemma \ref{specbound} we get
\begin{align*}
	\Exp\left[\|X(s)-X(t)\|^{-\gamma}\right] & \leq \Exp\left[\|X^{(1)}(mc^{-i})\|^{-\gamma}\right]= \Exp\left[\|c^{-iE_{1}}X^{(1)}(m)\|^{-\gamma}\right]\\
	& \leq  \|c^{iE_{1}}\|^{\gamma}\Exp\left[\|X^{(1)}(m)\|^{-\gamma}\right]\leq K\,c^{\gamma i/\alpha_{1}'} K_{11}\\
	&= Km^{\frac{\gamma}{\alpha_{1}'}}\cdot \left(mc^{-i}\right)^{-\frac{\gamma}{\alpha_{1}'}}\leq =K_{12}|s-t|^{-\frac{\gamma}{\alpha_{1}'}}.
	\end{align*}
(ii) Now assume $|s-t|\geq 1$ and choose $\alpha_{1}''>\alpha_{1}$. Write $|s-t|=mc^{i}$ with $m\in[1,c)$ and $i\in\mathbb{N}_{0}$. Then, using again Lemma \ref{specbound} we get as above
\begin{align*}
	\Exp\left[\|X(s)-X(t)\|^{-\gamma}\right] & = \|c^{-iE_{1}}\|^{\gamma} \Exp\left[\|X^{(1)}(m)\|^{-\gamma}\right]\leq K\,c^{-\gamma i/\alpha_{1}''} K_{11}\leq K\,K_{11} = K_{13}.
\end{align*}

Combining part (i) and (ii) in \eqref{Xsigma}, by \eqref{sigma} we get the desired upper bound in case $\alpha_1\dim_{\rm H}B\leq d_1$.

Now assume $\alpha_1\dim_{\rm H}B>d_1$, then $\alpha_{1}>d_{1}=1$ and hence $\dim_{\rm H}B>\frac{1}{\alpha_{1}}$. Choose $1<\gamma<1+\alpha_{2}(\dim_{\rm H}B-\frac{1}{\alpha_{1}})$, then since $\rho=\frac{\gamma}{\alpha_{2}}-(\frac{1}{\alpha_{2}}-\frac{1}{\alpha_{1}})<\dim_{\rm H}B$ we can choose $0<\alpha_{2}'<\alpha_{2}$ such that $\rho'=\frac{\gamma}{\alpha_{2}'}-(\frac{1}{\alpha_{2}'}-\frac{1}{\alpha_{1}})<\dim_{\rm H}B$. By Frostman's lemma there exists again a probability measure $\sigma$ on $E$ such that
\begin{equation}\label{sigma1}
	\int_{B}\int_{B}\frac{\sigma(ds)\,\sigma(dt)}{|s-t|^{\rho'}}<\infty.
\end{equation}
Again, in order to show \eqref{Xsigma} we split the domain of integration into two parts.

(i) Assume $|s-t|=mc^{-i}\leq 1$ with $m\in[1,c)$ and $i\in\mathbb{N}_{0}$. By Lemma \ref{specbound} we get
\begin{align*}
& \Exp\left[\|X(s)-X(t)\|^{-\gamma}\right]=\Exp\left[\|c^{-iE}X(m)\|^{-\gamma}\right]\\
	& \leq \Exp\left[\left(c^{-i\frac{2}{\alpha_{1}}}|X^{(1)}(m)|^{2} + \|X^{(2)(m)}\|^{2}/ \|c^{iE_{2}}\|^{2} \right)^{-\frac{\gamma}{2}}\right]\\
	&\leq K\int_{\rr^{1+d_2}} \frac{1}{c^{-i\frac{\gamma}{\alpha_{1}}}\left|x_{1}\right|^{\gamma} + c^{-i\frac{\gamma}{\alpha_{2}'}}\left\|x_{2}\right\|^{\gamma} } \,g_{m}(x_{1},x_{2}) \,dx_{1}\,dx_{2}\\
	&= K\int_{\rr^{1+d_2}} \frac{1}{m^{-\frac{\gamma}{\alpha_{1}}}\left(mc^{-i}\right)^{\frac{\gamma}{\alpha_{1}}}\left|x_{1}\right|^{\gamma} + m^{-\frac{\gamma}{\alpha_{2}'}}\left(mc^{-i}\right)^{\frac{\gamma}{\alpha_{2}'}}\left\|x_{2}\right\|^{\gamma}} \,g_{m}(x_{1},x_{2}) \,dx_{1}\,dx_{2}\\
	& \leq K\int_{\rr^{1+d_2}} \frac{1}{c^{-\frac{\gamma}{\alpha_{1}}}\left|s-t\right|^{\frac{\gamma}{\alpha_{1}}}\left|x_{1}\right|^{\gamma} + c^{-\frac{\gamma}{\alpha_{2}'}}\left|s-t\right|^{\frac{\gamma}{\alpha_{2}'}}\left\|x_{2}\right\|^{\gamma}} \,g_{m}(x_{1},x_{2}) \,dx_{1}\,dx_{2}\\
	& \leq K \int_{\rr^{1+d_2}} \frac{1}{\left|s-t\right|^{\frac{\gamma}{\alpha_{1}}}\left|x_{1}\right|^{\gamma} + \left|s-t\right|^{\frac{\gamma}{\alpha_{2}'}}\left\|x_{2}\right\|^{\gamma}}\, g_{m}(x_{1},x_{2}) \,dx_{1}\,dx_{2}\\
	&= K\left|s-t\right|^{-\frac{\gamma}{\alpha_{1}}}\int_{\rr^{1+d_2}} \frac{1}{\left|x_{1}\right|^{\gamma} + \left|s-t\right|^{\gamma(\frac{1}{\alpha_{2}'}-\frac{1}{\alpha_{1}})}\left\|x_{2}\right\|^{\gamma} } \,g_{m}(x_{1},x_{2}) \,dx_{1}\,dx_{2},
	\end{align*}
where $g_{m}(x_{1},x_{2})$ denotes a bounded continuous density of $(X^{(1)}(m),X^{(2)}(m))$ in $\rr^{1+d_2}\cong V_1\oplus V_2$. We will use integration by parts to derive an upper bound for the above integral $I$. Let
	\begin{align*}
		F_{m}(r_{1}, r_{2})=P\left(|X^{(1)}(m)|\leq r_{1}, \|X^{(2)}(m)\|\leq r_{2}\right).
	\end{align*}
which by transformation into spherical coordinates reads as 
	\begin{align*}
		F_{m}(r_{1}, r_{2})&=\int_{|x_{1}|\leq r_{1}}\int_{\|x_{2}\|\leq r_{2}} g_{m}(x_{1},x_{2}) \,dx_{1}\,dx_{2}\\
		&= \int_{-r_{1}}^{r_{1}} \int_{0}^{r_{2}} \int_{S_{d_{2}-1}} \tilde{g}_{m}(\rho_{1},\rho_{2}\theta)\rho_{2}^{d_{2}-1} \mu(\,d\theta)\,d\rho_{2}\,d\rho_{1},
	\end{align*}
	where $\tilde{g}_{m}(\rho_{1},\rho_{2}\theta)$ is a bounded continuous function in  $(\rho_{1},\rho_{2},\theta)\in\mathbb{R}\times\mathbb{R}_{+}\times S_{d_{2}-1}$ and $\mu$ is the surface measure on the unit sphere $S_{d_{2}-1}$ in $\mathbb{R}^{d_{2}}$. Note that by \eqref{K8} we have
	\begin{equation}\label{K8neu}
		\sup_{m\in[1,c)}\sup_{(\rho_{1},\rho_{2},\theta)\in\mathbb{R}\times\mathbb{R}_{+}\times S_{d_{2}-1}}\tilde{g}_{m}(\rho_{1},\rho_{2}\theta)=K_8<\infty.
	\end{equation}
For simplicity let $z=|s-t|^{\frac{1}{\alpha_{2}'}-\frac{1}{\alpha_{1}}}$. By Fubini's theorem and integration by parts with respect to $dr_{1}$ we get for the above integral $I$
	\begin{align*}
		I&=\int_{0}^{\infty}\int_{0}^{\infty} \frac{1}{r_{1}^{\gamma}+z^{\gamma}r_{2}^{\gamma}} \,F_{m}(\,dr_{1},\,dr_{2})\\
		&= \int_{0}^{\infty}\int_{0}^{\infty} \frac{1}{r_{1}^{\gamma}+z^{\gamma}r_{2}^{\gamma}} \int_{S_{d_{2}-1}} \tilde{g}_{m}(r_{1},r_{2}\theta) \,r_{2}^{d_{2}-1} \mu(\,d\theta)\,dr_{1}\,dr_{2}\\
		&=  0+\int_{0}^{\infty} \int_{0}^{\infty} \left[\frac{\gamma r_{1}^{\gamma-1}}{(r_{1}^{\gamma}+z^{\gamma}r_{2}^{\gamma})^{2}}\int_{0}^{r_{1}} \int_{S_{d_{2}-1}} \tilde{g}_{m}(\rho_{1},r_{2}\theta) r_{2}^{d_{2}-1} \mu(\,d\theta)\,d\rho_{1}\right] \,dr_{1}\,dr_{2}\\
		&= \int_{0}^{1}\int_{0}^{\infty} \left[\ldots\right]\,dr_{1}\,dr_{2} + \int_{1}^{\infty} \int_{0}^{\infty} \left[\ldots\right]\,dr_{1}\,dr_{2}=: I_{1} + I_{2}.
		\end{align*}
Now we estimate $I_1$ and $I_2$ separately. By a change of variables $r_{1}=zr_{2}s_{1}$ and \eqref{K8neu} we get
		\begin{align*}
			I_{1}& \leq K\int_{0}^{1} r_{2}^{d_{2}-1}\int_{0}^{\infty}\frac{\gamma r_{1}^{\gamma-1}}{\left(r_{1}^{\gamma}+z^{\gamma}r_{2}^{\gamma}\right)^{2}}\, r_{1}\,dr_{1}\,dr_{2}\\
			& = Kz^{-(\gamma-1)}\int_{0}^{1}r_{2}^{d_{2}-\gamma}\,dr_{2}\cdot\int_{0}^{\infty}\frac{\gamma s_{1}^{\gamma}}{\left(s_{1}^{\gamma}+1\right)^{2}} \,ds_{1}\\
			&\leq K_{14}z^{-(\gamma-1)}=K_{14}|s-t|^{-(\gamma-1)(\frac{1}{\alpha_{2}'}-\frac{1}{\alpha_{1}})},
		\end{align*}
since $1<\gamma<\alpha_{1}\leq 2\leq d_{2} +1$. In order to estimate $I_2$ first note that by \eqref{K8} we have
$$F(r_{1},r_{2})=\int_{|x_{1}|\leq r_{1}}\int_{\|x_{2}\|\leq r_{2}} g_{m}(x_{1},x_{2}) \,dx_{2} \,dx_{1}
			\leq \int_{|x_{1}|\leq r_{1}} g_{m}(x_{1}) \,dx_{1}\leq K_8\cdot 2r_{1}.$$
By Fubini's theorem and integration by parts with respect to $dr_{2}$ we further get
\begin{align*}
			I_{2}&= \int_{1}^{\infty} \int_{0}^{\infty}\left[ \frac{\gamma r_{1}^{\gamma-1}}{\left(r_{1}^{\gamma}+z^{\gamma}r_{2}^{\gamma}\right)^{2}}\int_{0}^{r_{1}}\int_{S_{d_{2}-1}}\tilde{g}_{m}(\rho_{1},r_{2}\theta)r_{2}^{d_{2}-1}\mu(\,d\theta)\,d\rho_{1} \right] \,dr_{1}\,dr_{2}\\
			&= -\int_{S_{d_{2}-1}}\int_{0}^{\infty}\frac{\gamma r_{1}^{\gamma-1}}{\left(r_{1}^{\gamma}+z^{\gamma}\right)^{2}} \int_{0}^{1}\int_{0}^{r_{1}} \tilde{g}_{m}(\rho_{1},\rho_{2}\theta)\rho_{2}^{d_{2}-1}\mu(\,d\theta)\,d\rho_{1}\,d\rho_{2}\,dr_{1}\,\mu(d\theta)\\
			&\phantom{=} + \int_{0}^{\infty}\int_{1}^{\infty} \frac{2\gamma^{2}z^{\gamma}r_{1}^{\gamma-1}r_{2}^{\gamma-1}}{\left(r_{1}^{\gamma}+z^{\gamma}r_{2}^{\gamma}\right)^{3}} \,F_{m}(r_{1},r_{2})\,dr_{2}\,dr_{1}\\
			&\leq \int_{0}^{\infty}\int_{1}^{\infty} \frac{2\gamma^{2}z^{\gamma}r_{1}^{\gamma-1}r_{2}^{\gamma-1}}{\left(r_{1}^{\gamma}+z^{\gamma}r_{2}^{\gamma}\right)^{3}} \,F_{m}(r_{1},r_{2})\,dr_{2}\,dr_{1}\\
			& \leq K\int_{1}^{\infty}\int_{0}^{\infty}\frac{z^{\gamma}r_{1}^{\gamma-1}r_{2}^{\gamma-1}}{\left(r_{1}^{\gamma}+z^{\gamma}r_{2}^{\gamma}\right)^{3}} \,r_{1}\,dr_{1}\,dr_{2}\\
			& = Kz^{-\gamma+1}\int_{1}^{\infty}\frac{1}{r_{2}^{\gamma}}\,dr_{2} \cdot\int_{0}^{\infty}\frac{s_{1}^{\gamma}}{\left(s_{1}^{\gamma}+1\right)^{3}}\,ds_{1}\\
			&=K_{15}z^{-\gamma+1}=K_{15}|s-t|^{-(\gamma-1)(\frac{1}{\alpha_{2}'}-\frac{1}{\alpha_{1}})},
		\end{align*}
since $\gamma>1$. Putting things together we finally get
\begin{align*}
	&\Exp\left[\|X(s)-X(t)\|^{-\gamma}\right]\leq   K |s-t|^{-\frac{\gamma}{\alpha_{1}}}\cdot \left(J_{1}+J_{2}\right)\\
	&\leq K |s-t|^{-\frac{\gamma}{\alpha_{1}}}\cdot \left( K_{14}|s-t|^{-(\gamma-1)(\frac{1}{\alpha_{2}'}-\frac{1}{\alpha_{1}})} + K_{15}|s-t|^{-(\gamma-1)(\frac{1}{\alpha_{2}'}-\frac{1}{\alpha_{1}})} \right)\\
	&\leq K |s-t|^{-\rho'},
\end{align*}

(ii) Now assume $|s-t|=mc^i\geq 1$ with $m\in[1,c)$ and $i\in\mathbb{N}_{0}$. Choose $\alpha_{2}''>\alpha_{2}$, then by Lemma \ref{specbound} we have
\begin{align*}
		\Exp\left[\|X(s)-X(t)\|^{-\gamma}\right] &= \Exp\left[\|X(mc^{i})\|^{-\gamma}\right]\\
		& \leq \Exp\left[\left(c^{i\frac{2}{\alpha_{1}}}|X^{(1)}(m)|^{2} + c^{i\frac{2}{\alpha_{2}''}}\|X^{(2)}(m)\|^{2} \right)^{-\frac{\gamma}{2}}\right]\\
		& \leq \Exp\left[\|(X^{(1)}(m),X^{(2)}(m))\|^{-\gamma}\right]\leq K_{16}<\infty
	\end{align*}
uniformly in $m\in[1,c)$ in view of Lemma \ref{negmom}, since $\gamma<2\leq 1+d_{2}$.

Combining the results of part (i) and part (ii), as above we see that \eqref{Xsigma} is fulfilled and by  Frostman's theorem we get $\dim_{\rm H}X(B)\geq \gamma$ almost surely. Since $\gamma<\alpha_{1}\dim_{\rm H}B$ is arbitrary, this concludes the proof.
\end{proof}

\subsection{Proof of our main results}

Theorem \ref{main} is now a direct consequence of Lemma \ref{upperbound} together with Lemma \ref{lowerbound} and it only remains to prove Theorem \ref{dim1}. In case $\alpha\dim_{\rm H}B \leq1$, Lemma  \ref{upperbound} and Lemma \ref{lowerbound} are still valid in the one-dimensional situation $d=1$; see Remark \ref{rem1dim}. Together these immediately give $\dim_{\rm H}X(B)=\alpha\dim_{\rm H}B=\min(\alpha\dim_{\rm H}B,1)$ almost surely. Hence it remains to prove that $\dim_{\rm H}X(B)\geq1$ almost surely if $\alpha\dim_{\rm H}B>1$, since $\dim_{\rm H}X(B)\leq1$ is obvious. But, assuming $0<\gamma<\min(\alpha\dim_{\rm H}B,1)$, we can proceed as in the proof of the upper case of Lemma \ref{lowerbound} with $E_1=1/\alpha$ and $\alpha_1'=\alpha$ to conclude that \eqref{Xsigma} holds and hence $\dim_{\rm H}X(B)\geq\min(\alpha\dim_{\rm H}B,1)$ almost surely.\hfill $\Box$

\begin{remark}
Meerschaert and Xiao \cite{MX} present an alternative analytic way to determine $\dim_{\rm H}X([0,1])$ for an operator stable L\'evy process $\{X(t)\}_{t\geq0}$ using an index theorem of Khoshnevisan et al. \cite{KXZ}. This method heavily depends on the fine structure of the exponent as given in Theorem 3.1 of Meerschaert and Veeh \cite{MV} and implicitly uses the characterization of the set $\mathcal E$ of all possible exponents as 
\begin{equation}\label{expset}
\mathcal E=E_{\rm c}+\mathfrak T\mathcal S(\mu_1)
\end{equation}
due to Holmes et al. \cite{HHM}. Here, 
$$\mathcal S(\mu_1)=\{A\in\GL(\rd):\,\mu_1(A^{-1}dx)=\mu_1(dx)\}$$
denotes the symmetry group, $\mathfrak T\mathcal S(\mu_1)$ is its tangent space and $E_{\rm c}$ is a commuting exponent with $E_{\rm c}A=AE_{\rm c}$ for every $A\in\mathcal S(\mu_1)$. For our case of an operator semistable L\'evy process, existence of a commuting exponent $E_{\rm c}$ is known by Theorem 1.11.6 in Hazod and Siebert \cite{HS}. But due to the discrete scaling it is still an open question if the set $\mathcal E$ of possible exponents has an affine representation as in \eqref{expset} with an $\mathcal S(\mu_1)$-invariant subspace. Hence it is unclear, whether the Hausdorff dimension of the range $\dim_{\rm H}X([0,1])$ of an operator semistable L\'evy process can be obtained by a generalization of the analytic approach in section 4 of Meerschaert and Xiao \cite{MX}. However, by the presented method we can additionally determine the Hausdorff dimension of the partial range $\dim_{\rm H}X(B)$ for arbitrary Borel sets $B\subseteq\rr_+$.
\end{remark}

\bibliographystyle{plain}

\end{document}